\newtheorem{theorem}{Theorem}[section]
\newtheorem{corollary}[theorem]{Corollary}
\newtheorem{lemma}[theorem]{Lemma}
\newtheorem{proposition}[theorem]{Proposition}
\newtheorem{conjecture}[theorem]{Conjecture}
\theoremstyle{definition}
\newtheorem{definition}[theorem]{Definition}
\newtheorem{remark}[theorem]{Remark}
\newtheorem{example}[theorem]{Example}
\theoremstyle{remark}
\renewcommand{\theclaim}{\textup{\theclaim}}
\newtheorem*{acknowledgements}{Acknowledgements}
\numberwithin{equation}{section}
\def\openone
\newbox\ipbox
\newcommand{\ip}[2]{\left\langle #1\, , \,#2\right\rangle}
\newcommand{\diracb}[1]{\left\langle #1\mathrel{\mathchoice

{\setbox\ipbox=\hbox{$\displaystyle \left\langle\mathstrut
#1\right.$}

\vrule height\ht\ipbox width0.25pt depth\dp\ipbox}

{\setbox\ipbox=\hbox{$\textstyle \left\langle\mathstrut
#1\right.$}

\vrule height\ht\ipbox width0.25pt depth\dp\ipbox}

{\setbox\ipbox=\hbox{$\scriptstyle \left\langle\mathstrut
#1\right.$}

\vrule height\ht\ipbox width0.25pt depth\dp\ipbox}

{\setbox\ipbox=\hbox{$\scriptscriptstyle \left\langle\mathstrut
#1\right.$}

\vrule height\ht\ipbox width0.25pt depth\dp\ipbox}

}\right. }
\newcommand{\dirack}[1]{\left. \mathrel{\mathchoice

{\setbox\ipbox=\hbox{$\displaystyle \left.\mathstrut
#1\right\rangle$}

\vrule height\ht\ipbox width0.25pt depth\dp\ipbox}

{\setbox\ipbox=\hbox{$\textstyle \left.\mathstrut
#1\right\rangle$}

\vrule height\ht\ipbox width0.25pt depth\dp\ipbox}

{\setbox\ipbox=\hbox{$\scriptstyle \left.\mathstrut
#1\right\rangle$}

\vrule height\ht\ipbox width0.25pt depth\dp\ipbox}

{\setbox\ipbox=\hbox{$\scriptscriptstyle \left.\mathstrut
#1\right\rangle$}

\vrule height\ht\ipbox width0.25pt depth\dp\ipbox}

} #1\right\rangle}
\newcommand{\cj}[1]{\overline{#1}}
\newcommand{\bz}{\mathbb{Z}}
\newcommand{\br}{\mathbb{R}}
\newcommand{\bc}{\mathbb{C}}
\newcommand{\bn}{\mathbb{N}}
\newcommand{\beq}{\begin{equation}}
\newcommand{\eeq}{\end{equation}}
\def\blfootnote{\xdef\@thefnmark{}\@footnotetext}
\renewcommand{\mod}{\operatorname{mod}}
\def\-{^{-1}}
\def\ty{\emptyset}
\begin{document}

\title[Weighted Fourier frames on fractal measures]{Weighted Fourier frames on fractal measures}
\author{Dorin Ervin Dutkay}

\address{[Dorin Ervin Dutkay] University of Central Florida\\
	Department of Mathematics\\
	4000 Central Florida Blvd.\\
	P.O. Box 161364\\
	Orlando, FL 32816-1364\\
U.S.A.\\} \email{Dorin.Dutkay@ucf.edu}

\author{Rajitha Ranasinghe}

\address{[Rajitha Ranasinghe] University of Central Florida\\
	Department of Mathematics\\
	4000 Central Florida Blvd.\\
	P.O. Box 161364\\
	Orlando, FL 32816-1364\\
U.S.A.\\} \email{rajitha13@knights.ucf.edu }

\thanks{} 
\subjclass[2010]{42B05, 42A85, 28A25}    
\keywords{Cantor set, Fourier frames, iterated function systems, base $N$ decomposition of integers}

\begin{abstract}
We generalize an idea of Picioroaga and Weber \cite{PW15} to construct Paseval frames of weighted exponential functions for self-affine measures. 
 
\end{abstract}
\maketitle \tableofcontents

\section{Introduction}
In \cite{JP98}, Jorgensen and Pedersen proved that there exist singular measures $\mu$ which are spectral, that is, there exists a sequence of exponential functions which form an orthonormal basis for $L^2(\mu)$. Their example is based on the Cantor set with scale $4$ and digits $0$ and $2$:
$$C_4=\left\{\sum_{k=1}^\infty \frac{a_k}{4^k} : a_k\in\{0,2\}\right\}.$$
The spectral measure $\mu_4$ is the restriction to the set $C_4$ of the Hausdorff measure with dimension $\frac12$. It can also be seen as the invariant measure for the iterated function system
$$\tau_0(x)=\frac{x}4,\quad \tau_2(x)=\frac{x+2}4.$$
Jorgensen and Pedersen proved that the set of exponential functions 
$$\left\{e_\lambda : \lambda=\sum_{k=0}^n 4^k l_k, l_k\in\{0,1\}, n\in \bn\right\}$$
is an orthonormal basis for $\mu_4$ (here $e_\lambda(x)=e^{2\pi i \lambda x}$). 

Many other examples of spectral singular measures were constructed since (see e.g., \cite{Str00, LaWa02, DJ06,DJ07d}), most of them are based on affine iterated function systems.

\begin{definition}\label{defifs}

For a given expansive $d\times d$ integer matrix $R$ and a finite set of integer vectors $B$ with cardinality $|B| =: N$, we define the {\it affine iterated function system} (IFS) $\tau_b(x) = R^{-1}(x+b)$, $x\in \br^d, b\in B$. The {\it self-affine measure} (with equal weights) is the unique probability measure $\mu = \mu(R,B)$ satisfying
\begin{equation}\label{self-affine}
\mu(E) = \frac1N\sum_{b\in B}  \mu (\tau_b^{-1} (E)),\mbox{ for all Borel subsets $E$ of $\br^d$.}
\end{equation}
This measure is supported on the {\it attractor} $X_B$ which is the unique compact set that satisfies
$$
X_B= \bigcup_{b\in B} \tau_b(X_B).
$$
The set $X_B$ is also called the {\it self-affine set} associated with the IFS, and it can be described as 
$$X_B=\left\{\sum_{k=1}^\infty R^{-k}b_k : b_k\in B\right\}.$$

 One can refer to \cite{Hut81} for a detailed exposition of the theory of iterated function systems. We say that $\mu = \mu(R,B)$ satisfies the {\it no overlap condition} if
$$
\mu(\tau_{b}(X_B)\cap \tau_{b'}(X_B))=0, \ \forall b\neq b'\in B.
$$

For $\lambda\in\br^d$, define
$$e_\lambda(x)=e^{2\pi i\lambda\cdot x},\quad(x\in \br^d).$$

For a Borel probability measure $\mu$ on $\br^d$ we define its {\it Fourier transform} by
$$\widehat\mu(t)=\int e^{2\pi it\cdot x}\,d\mu,\quad(t\in\br^d).$$

A {\it frame} for a Hilbert space $H$ is a family $\{e_i\}_{i\in I}\subset H$ such that there exist constants $A,B>0$ such that for all $v\in H$,
$$A\|v\|^2\leq \sum_{i\in I}|\ip{v}{e_i}|^2\leq B\|v\|^2.$$
The largest $A$ and smallest $B$ which satisfy these inequalities are called the {\it frame bounds}. The frame is called a {\it Parseval} frame if both frame bounds are $1$. 

 \end{definition}

In \cite{PW15}, Picioroaga and Weber introduced an interesting idea for the construction of weighted exponential frames for the Cantor set $C_4$ in Jorgensen and Pedersen's example. Weighted means that the exponential function is multiplied by a constant. The general idea is the following: first construct a dilation of the space $L^2(\mu_4)$ by choosing another measure $\mu'$ (in their case, the Lebesgue measure on [0,1]) and considering the Hilbert space $L^2(\mu_4\times\mu')$. The space $L^2(\mu_4)$ can be regarded as a subspace of this Hilbert space, as the space of functions that depend only on the first variable. Then they construct an orthonormal set of functions in $L^2(\mu_4\times \mu')$ which project onto a Parseval frame of weighted exponential functions in $L^2(\mu_4)$. The general result is a basic fact in frame theory (see e.g., \cite{MR1242070}):

\begin{lemma}\label{lemframe}
Let $H$ be a Hilbert space, $V\subset K$, closed subspaces and let $P_V$ be the orthogonal projection onto $V$. If $\{e_i\}_{i\in I}$ is an orthonormal basis for $K$, then $\{P_Ve_i\}_{i\in I}$ is a Parseval frame for $V$.
\end{lemma}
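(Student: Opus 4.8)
The plan is to prove Lemma~\ref{lemframe} directly from the definition of a Parseval frame by exploiting the fact that $P_V$ is a self-adjoint idempotent. First I would fix an arbitrary $v\in V$ and compute the frame sum $\sum_{i\in I}|\ip{v}{P_Ve_i}|^2$, aiming to show it equals $\|v\|^2$. The key algebraic step is to move the projection off of $e_i$ and onto $v$: using self-adjointness of $P_V$ together with the hypothesis $v\in V$ (so that $P_Vv=v$), we have $\ip{v}{P_Ve_i}=\ip{P_Vv}{e_i}=\ip{v}{e_i}$ for every $i\in I$. This reduces the weighted-projection frame sum to the ordinary inner products with the orthonormal basis of $K$.

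With that reduction in hand, the second step is simply to invoke Parseval's identity in $K$. Since $\{e_i\}_{i\in I}$ is an orthonormal basis for $K$ and $v\in V\subset K$, Parseval's equality gives $\sum_{i\in I}|\ip{v}{e_i}|^2=\|v\|^2$. Combining this with the identity from the first step yields
\begin{equation}
\sum_{i\in I}|\ip{v}{P_Ve_i}|^2=\sum_{i\in I}|\ip{v}{e_i}|^2=\|v\|^2,
\end{equation}
which is exactly the Parseval frame condition with both bounds equal to $1$. Since $v\in V$ was arbitrary, this shows $\{P_Ve_i\}_{i\in I}$ is a Parseval frame for $V$.

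I do not expect any genuine obstacle here: the entire content is the interplay $\ip{v}{P_Ve_i}=\ip{P_Vv}{e_i}=\ip{v}{e_i}$ for $v\in V$, after which Parseval in the ambient space $K$ finishes the argument. The only points requiring mild care are confirming that each $P_Ve_i$ indeed lies in $V$ (immediate, since $P_V$ maps into $V$) and noting that the argument works verbatim whether $I$ is finite or countably infinite, as the sums involved converge unconditionally by Parseval's identity in $K$.
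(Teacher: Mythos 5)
Your proof is correct: the identity $\ip{v}{P_Ve_i}=\ip{P_Vv}{e_i}=\ip{v}{e_i}$ for $v\in V$, followed by Parseval's identity in $K$, is exactly the standard argument for this fact. The paper itself offers no proof (it cites the lemma as a basic fact of frame theory), so there is nothing to contrast with; your argument fills that gap in the expected way.
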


To construct the orthonormal set in $L^2(\mu_4\times \mu')$ a representation of the Cuntz algebra $\mathcal O_M$ is constructed. Recall that the Cuntz algebra is generated by $M$ isometries with the properties 
\begin{equation}
S_i^*S_j=\delta_{ij}I,\quad \sum_{i=1}^MS_iS_i^*=I.
\label{eqcuntz}
\end{equation}

Then the Cuntz isometries are applied to the constant function $\mathbf 1$:
$$\left\{S_{\omega_1}\dots S_{\omega_k}\mathbf 1 : \omega_i\in\{1,\dots, M\}, n\in \bn\right\},$$
to obtain the orthonormal set which is then projected onto the subspace $L^2(\mu_4)$ and produces a Parseval frame of weighted exponential functions. This is based on a modification of Theorem 3.1 from \cite{MR3103223}:

\begin{theorem}\cite{MR3103223}\label{thoc}
Let $H$ be a Hilbert space, $K\subset H$ a closed subspace, and $(S_i)_{i=1}^{M}$ be a representation of the Cuntz algebra $\mathcal O_M$. Let $\mathcal E$ be an orthonormal set in $H$ and $f:X\rightarrow K$ be a norm continuous function on a topological space $X$ with the following properties:
\begin{enumerate}
	\item $\mathcal E=\cup_{i=1}^MS_{i}\mathcal E$, where the union is disjoint. 
	\item $\cj{\mbox{span}}\{f(t): t\in X\}=K$ and $\|f(t)\|=1$, for all $t\in X$. 
	\item There exist functions $\mathfrak m_i:X\rightarrow \bc$, $g_i:X\rightarrow X$, $i=1,\dots,M$ such that 
	\begin{equation}
	S_i^*f(t)=\mathfrak m_i(t)f(g_i(t)),\quad (t\in X).
	\label{eqtoc1}
	\end{equation}
	\item There exists $c_0\in X$ such that $f(c_0)\in \cj{\mbox{span}}\mathcal E$.
	\item The only function $h\in C(X)$ with $0\leq h\leq 1$, $h(c)=1$, for all $c\in \{x\in X : f(x)\in \cj{\mbox{span}}\mathcal E\}$, and 
	\begin{equation}
	h(t)=\sum_{i=1}^M |\mathfrak m_i(t)|^2h(g_i(t)),\quad (t\in X),
	\label{eqtoc2}
	\end{equation}
	is the constant function $\mathbf 1$.
\end{enumerate}
Then $K\subset \cj{\mbox{span}}\mathcal E$. 
\end{theorem}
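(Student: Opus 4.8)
The plan is to prove that $f(t)\in\cj{\mbox{span}}\mathcal E$ for every $t\in X$; since hypothesis (2) identifies $K=\cj{\mbox{span}}\{f(t):t\in X\}$, this at once gives $K\subset\cj{\mbox{span}}\mathcal E$. Write $M:=\cj{\mbox{span}}\mathcal E$ and let $P$ be the orthogonal projection of $H$ onto $M$. The device I would use is to feed the function $h(t):=\|Pf(t)\|^2$ into the uniqueness hypothesis (5). Because $\|f(t)\|=1$ we have $0\le h\le 1$, and $h$ is continuous since $f$ is norm continuous and $P$ is bounded. Wherever $f(x)\in M$ we have $Pf(x)=f(x)$ and hence $h(x)=1$, so $h=1$ on the set $\{x:f(x)\in\cj{\mbox{span}}\mathcal E\}$. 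This is precisely where hypothesis (4) enters: it guarantees that this set is nonempty (it contains $c_0$), which is what rules out the competing solution $h\equiv 0$ and gives the normalization in (5) its content.

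The key structural step, and the one I expect to carry the whole argument, is the orthogonal decomposition $M=\bigoplus_{i=1}^M S_iM$. Since by (1) the union $\mathcal E=\cup_i S_i\mathcal E$ is disjoint and each $S_i$ is an isometry, each $S_i\mathcal E$ is an orthonormal set with $\cj{\mbox{span}}(S_i\mathcal E)=S_iM$; moreover the Cuntz relation $S_i^*S_j=\delta_{ij}I$ forces $\ip{S_iu}{S_jv}=\delta_{ij}\ip{u}{v}$, so $S_iM\perp S_jM$ for $i\neq j$. Taking closed spans then yields $M=\bigoplus_i S_iM$. From this I would deduce the projection formula $P=\sum_{i=1}^M S_iPS_i^*$, checking that each $S_iPS_i^*$ is self-adjoint, idempotent, and has range exactly $S_iM$ (short computations using $S_i^*S_i=I$ and $P=P^*=P^2$); being mutually orthogonal projections onto the summands $S_iM$, they sum to the projection onto $\bigoplus_i S_iM=M$, which is $P$.

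With the projection formula in hand the functional equation follows by a direct computation. Using $h(t)=\ip{Pf(t)}{f(t)}$ together with $P=\sum_i S_iPS_i^*$, I would write
\begin{equation*}
h(t)=\sum_{i=1}^M\ip{S_iPS_i^*f(t)}{f(t)}=\sum_{i=1}^M\ip{PS_i^*f(t)}{S_i^*f(t)},
\end{equation*}
and then substitute the intertwining relation (3), $S_i^*f(t)=\mathfrak m_i(t)f(g_i(t))$, to obtain
\begin{equation*}
h(t)=\sum_{i=1}^M|\mathfrak m_i(t)|^2\ip{Pf(g_i(t))}{f(g_i(t))}=\sum_{i=1}^M|\mathfrak m_i(t)|^2\,h(g_i(t)),
\end{equation*}
which is exactly \eqref{eqtoc2}. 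Thus $h$ meets every requirement in hypothesis (5), so $h\equiv\mathbf 1$. Finally, $\|Pf(t)\|^2=1=\|f(t)\|^2$ for all $t$ forces $Pf(t)=f(t)$, i.e.\ $f(t)\in M$ for every $t\in X$, which completes the proof.

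The only genuine subtlety is the decomposition $M=\bigoplus_i S_iM$, and this is where the disjointness in hypothesis (1) is indispensable: it is what guarantees that the sets $S_i\mathcal E$ assemble into a single orthonormal set whose closed span splits as the orthogonal direct sum of the $S_iM$. Everything after that step is a routine manipulation of the Cuntz relations and the intertwining identity, culminating in an application of the uniqueness assumption.
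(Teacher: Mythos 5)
Your proposal is correct and takes essentially the same approach as the paper: the paper does not reprove this cited result, but the analogous argument embedded in its proof of Theorem \ref{th1} uses the very same function $h(t)=\|P_{\mathcal K}f(t)\|^2$, verifies the functional equation \eqref{eqtoc2} by splitting the Parseval sum $\sum_{v\in\mathcal E}|\ip{v}{f(t)}|^2$ over the disjoint union $\mathcal E=\cup_i S_i\mathcal E$, and invokes the uniqueness hypothesis to get $h\equiv\mathbf 1$ and hence $f(t)\in\cj{\mbox{span}}\mathcal E$. Your operator identity $P=\sum_i S_iPS_i^*$ is just a repackaging of that sum-splitting step, so the two arguments coincide in substance.
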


In this paper, we generalize and refine the construction from \cite{PW15} to build Parseval frames of weighted exponential functions (in other words weighted Fourier frames) for the Hilbert space $L^2(\mu(R,B))$ associated to the invariant measure of an affine iterated function system as in Definition \ref{defifs}.

Our main result is:

\begin{theorem}\label{th1}
Let $R$ be a $d\times d$ expansive integer matrix, $B$ be a finite subset of $\bz^d$, $0\in B$, $N:=|B|$, and suppose that the measure $\mu=\mu(R,B)$ has no overlap. Assume that there exists a finite set $L\subset \bz^d$ with $0\in L$, $|L|=:M$, and complex numbers $(\alpha_{l})_{l\in L}$,  such that the following properties are satisfied:
\begin{enumerate}
	\item $\alpha_{0}=1$.
	\item The matrix 
	\begin{equation}
	T:=\frac{1}{\sqrt{N}} \left(  e^{2 \pi i {({R}^{T}})^{-1} l \cdot b} \alpha_{l} \right)_{l\in L, b\in B}
	\label{eqt1.1}	
	\end{equation}
	is an isometry, i.e., $T^{*}T=I_{N}$, i.e., its columns are orthonormal. 	
	\item The only entire function $h$ on $\bc^d$ with the property that $0\leq h\leq 1$ on $\br^d$, $h(0)=1$ and 
	\begin{equation}
	\sum_{l\in L}|\alpha_{l}|^2\left|m_B((R^T)^{-1}(t-l))\right|^2h((R^T)^{-1}(t-l))=h(t),\quad(t\in \br^d)
	\label{eqt1.3}
	\end{equation}
	is the constant function $\mathbf 1$. Here,
	\begin{equation}
	m_B(x)=\frac{1}{N}\sum_{b\in B}e^{2\pi i b\cdot x},\quad(x\in \br^d).
	\label{eqt1.4}
	\end{equation}

\end{enumerate}

Let $\Omega(L)$ be the set of finite words with digits in $L$: 
$$\Omega(L)=\{\ty\}\cup\{l_0\dots l_k : l_i\in L, l_k\neq 0\},\quad(\ty\mbox{ represents the empty word}).
$$Then the set 
\begin{equation}
\left\{\left( \prod_{i=0}^k\alpha_{l_i}\right) e_{l_0+R^T l_1+\dots +(R^T)^{k}l_k} : l_0\dots l_k\in\Omega(L)\right\}
\label{eqt1.5}
\end{equation}
is a Parseval frame for $L^2(\mu(R,B))$.
\end{theorem}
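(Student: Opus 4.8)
\emph{Strategy.} The plan is to exhibit the family \eqref{eqt1.5} as the orthogonal projection onto $L^2(\mu)$ of an orthonormal basis built from a representation of the Cuntz algebra $\mathcal{O}_M$, exactly along the lines encapsulated by Theorem \ref{thoc} and Lemma \ref{lemframe}. I start on $V:=L^2(\mu)$ by defining, for each $l\in L$, the operator
\[
(S_l f)(x)=\alpha_l\, e_l(x)\, f(\sigma x),\qquad f\in L^2(\mu),
\]
where $\sigma\colon X_B\to X_B$, well defined $\mu$-a.e.\ by the no--overlap hypothesis, is the inverse-branch (shift) map characterized by $\sigma\circ\tau_b=\mathrm{id}$ on $\tau_b(X_B)$; since $\sigma$ is measure preserving for $\mu$, each $S_l$ is bounded. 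A direct check gives $S_l e_\lambda=\alpha_l\,e_{l+R^T\lambda}$ for every $\lambda\in\bz^d$, hence
\[
S_{l_0}\cdots S_{l_k}\mathbf 1=\Big(\prod_{i=0}^k\alpha_{l_i}\Big)\,e_{l_0+R^Tl_1+\dots+(R^T)^kl_k},
\]
which is precisely a vector appearing in \eqref{eqt1.5}; in particular $\alpha_0=1$ forces $S_0\mathbf 1=\mathbf 1$.

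\emph{The Cuntz structure.} The crucial algebraic input is condition (2). Computing $\sum_{l\in L}\norm{S_l^*g}^2$ from the explicit adjoint $(S_l^*g)(y)=\frac{\cj{\alpha_l}}{N}\sum_{b\in B}\cj{e_l(\tau_b y)}\,g(\tau_b y)$ produces a double sum over $B\times B$ whose coefficients are $\frac1N\sum_{l}|\alpha_l|^2 e^{2\pi i(R^T)^{-1}l\cdot(b'-b)}$; by $T^*T=I_N$ these equal $\delta_{b,b'}$, so the sum collapses to $\frac1N\sum_{b}\int|g(\tau_b y)|^2\,d\mu(y)=\norm g^2$ by the invariance \eqref{self-affine}. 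Thus $\sum_{l\in L}S_lS_l^*=I$ on $L^2(\mu)$, i.e.\ $(S_l)_{l\in L}$ is a co-isometric row. The $S_l$ are not individually isometric, so I pass to the minimal isometric (Popescu) dilation: a Hilbert space $H\supseteq V$ carrying isometries $(\tilde S_l)_{l\in L}$ with $V$ co-invariant and $\tilde S_l^*|_V=S_l^*$; because the row is co-isometric, $\sum_l\tilde S_l\tilde S_l^*=I$, so $(\tilde S_l)$ is a genuine representation of $\mathcal{O}_M$. Since $S_0$ is already isometric and fixes $\mathbf 1$, one gets $\tilde S_0\mathbf 1=\mathbf 1$, whence $\tilde S_l^*\mathbf 1=\tilde S_l^*\tilde S_0\mathbf 1=\delta_{l,0}\mathbf 1$. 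The Cuntz relations then make
\[
\mathcal E:=\{\tilde S_{l_0}\cdots\tilde S_{l_k}\mathbf 1:\ l_0\cdots l_k\in\Omega(L)\}
\]
an orthonormal set in $H$ — the constraint $l_k\neq0$ in $\Omega(L)$ removes exactly the redundancy produced by $\tilde S_0\mathbf 1=\mathbf 1$ — and yields $\mathcal E=\bigsqcup_{l\in L}\tilde S_l\mathcal E$, which is hypothesis (1) of Theorem \ref{thoc}. Finally, co-invariance of $V$ gives $P_V\big(\tilde S_{l_0}\cdots\tilde S_{l_k}\mathbf 1\big)=S_{l_0}\cdots S_{l_k}\mathbf 1$, the weighted exponential above.

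\emph{Completeness via Theorem \ref{thoc}.} I apply Theorem \ref{thoc} with $K=V=L^2(\mu)$, $X=\br^d$, $c_0=0$, and $f(t)=e_t$, which is norm-continuous with $\norm{f(t)}=1$ and total in $L^2(\mu)$, so hypothesis (2) holds. A short computation gives the intertwining relation
\[
\tilde S_l^* e_t=S_l^* e_t=\cj{\alpha_l}\,m_B\big((R^T)^{-1}(t-l)\big)\,e_{(R^T)^{-1}(t-l)},
\]
so hypothesis (3) holds with $g_l(t)=(R^T)^{-1}(t-l)$ and $\mathfrak m_l(t)=\cj{\alpha_l}\,m_B((R^T)^{-1}(t-l))$, for which $|\mathfrak m_l(t)|^2=|\alpha_l|^2|m_B((R^T)^{-1}(t-l))|^2$. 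Hypothesis (4) holds since $f(0)=\mathbf 1\in\mathcal E$. With these choices the fixed-point equation in hypothesis (5) is literally \eqref{eqt1.3}, and because $0\in\{x:f(x)\in\Span\mathcal E\}$ any admissible $h$ there satisfies $h(0)=1$; condition (3) of the present theorem then forces $h\equiv\mathbf 1$, giving hypothesis (5). Theorem \ref{thoc} concludes $L^2(\mu)=K\subseteq\Span\mathcal E$. As $\mathcal E$ is orthonormal, it is an orthonormal basis of $\Span\mathcal E\supseteq V$, and Lemma \ref{lemframe} shows that $\{P_V e:e\in\mathcal E\}$ — which is exactly \eqref{eqt1.5} — is a Parseval frame for $L^2(\mu(R,B))$.

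\emph{Main obstacle.} The algebraic heart ($\sum_l S_lS_l^*=I$) and the intertwining computation are short; the genuine difficulty is the last step of hypothesis (5), where Theorem \ref{thoc} quantifies over all \emph{continuous} $h$ on $X=\br^d$, whereas condition (3) controls only \emph{entire} $h$. Bridging this requires showing that every bounded continuous solution $h\ge0$ of the transfer equation \eqref{eqt1.3} is automatically the restriction of an entire function — the standard route is to identify such fixed points with objects built from $\fo{\mu}$, which is entire because $\mu$ has compact support — after which $h(0)=1$ together with \eqref{eqt1.3} places us within the scope of condition (3). Secondary points to pin down rigorously are the density of $\{e_t:t\in\br^d\}$ in $L^2(\mu)$ and a concrete realization of the dilation $H$ (for instance as a product space in the manner of \cite{PW15}), though the co-invariant dilation alone suffices for the argument above.
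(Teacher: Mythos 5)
Your construction of the Cuntz representation is correct and is a genuinely different route from the paper's: the paper builds a concrete dilation space $L^2(\mu\times\mu')$ from an auxiliary affine IFS $(R',B')$, completes the isometry $T$ of \eqref{eqt1.1} to a unitary matrix $\left(a_{(b,b'),(c,c')}\right)$ subject to the extra row-sum condition \eqref{eqp1.2}, and defines explicit filter operators there, whereas you obtain the same structure abstractly. Your identity $\sum_{l\in L}S_lS_l^*=I$ on $L^2(\mu)$ (a correct consequence of condition (ii) together with invariance and no overlap), the passage to Popescu's minimal isometric dilation of a co-isometric row (which is indeed of Cuntz type), the fixed vector $\tilde S_0\mathbf 1=\mathbf 1$, the orthonormality of $\mathcal E$, the identity $P_V\tilde S_{l_0}\cdots\tilde S_{l_k}\mathbf 1=S_{l_0}\cdots S_{l_k}\mathbf 1$, and the intertwining relation $S_l^*e_t=\cj{\alpha_l}\,m_B(g_l(t))\,e_{g_l(t)}$ are all sound; this bypasses the auxiliary measure $\mu'$ and the matrix-completion bookkeeping entirely, which is a real simplification of that part of the argument.

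However, the completeness step has a genuine gap, exactly at the point you flag. Theorem \ref{thoc} cannot be invoked as a black box: its hypothesis (v) quantifies over all \emph{continuous} $h$ on $X=\br^d$, while hypothesis (iii) of Theorem \ref{th1} only excludes non-constant \emph{entire} solutions of \eqref{eqt1.3}. Your proposed bridge --- that every bounded continuous nonnegative solution of \eqref{eqt1.3} is automatically the restriction of an entire function --- is not carried out, and it is far stronger than what is needed: it would amount to a structure theorem for the entire fixed-point space of the Ruelle operator (in the spirit of cycle decompositions of continuous harmonic functions), which is a substantial theory and is not established at this level of generality. The paper avoids this issue altogether: it does not apply Theorem \ref{thoc}, but reruns its proof for the single function that matters, namely $h(t)=\|P_{\mathcal K}e_t\|^2=\sum_{\omega}|\ip{e_t}{S_\omega\mathbf 1}|^2$ with $\mathcal K=\Span\mathcal E$, as in \eqref{eqh}, and proves that \emph{this particular} $h$ is entire: each coefficient $f_\omega(z)=\ip{e_z}{S_\omega\mathbf 1}$ is entire because the measures have compact support; $f_\omega(z)f_\omega^*(z)$, with $f_\omega^*(z)=\cj{f_\omega(\cj z)}$, is entire and equals $|\ip{e_t}{S_\omega\mathbf 1}|^2$ for real $t$; and the partial sums are uniformly bounded on bounded sets via H\"older and the Bessel bound $\|e_z\|\,\|e_{\cj z}\|\leq e^{K|\Im z|}$, so Montel--Vitali yields an entire limit. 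Then condition (iii) applies to this $h$, forces $h\equiv 1$, hence $\|P_{\mathcal K}e_t\|=\|e_t\|$, so $e_t\in\mathcal K$, and Stone--Weierstrass gives $V\subseteq\mathcal K$. This argument transfers verbatim to your abstract dilation, since $\ip{e_z}{\tilde S_\omega\mathbf 1}=\ip{S_\omega^*e_z}{\mathbf 1}$ (inner product in $L^2(\mu)$) is explicitly computable from your intertwining relation; with that substitution in place of the appeal to Theorem \ref{thoc}, your proof becomes complete. As written, though, the reduction of hypothesis (v) of Theorem \ref{thoc} to condition (iii) is missing.
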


\begin{remark}
The non-overlap condition is satisfied if the elements in $B$ are incongruent modulo $R\bz^d$, see \cite[Theorem 1.7]{DuLa15}.
\end{remark}

In section 2, we present the proof of our main result. In section 3 we include some remarks on the hypotheses of Theorem \ref{th1} and present some restrictions that they entail. In section 4, we focus on the case of dimension $d=1$ and rephrase the condition (iii) in Theorem \ref{th1} in simpler terms. We conclude the paper with some examples and a conjecture, in section 5. 
\section{Proof}

\begin{proof}[Proof of Theorem \ref{th1}]
We will transform our hypotheses into the setting in \cite{PW15} and after that we follow the ideas from \cite{PW15} in this, more general, context.

 Consider now another system with $R^{\prime}$ a $d^{\prime} \times d^{\prime}$ expansive integer matrix, $B^{\prime} \subset \mathbb{Z}^{d^{\prime}}, 0 \in B^{\prime}, |B^{\prime}|=N^{\prime}$ and let $\mu^{\prime}$ be the invariant measure for the iterated function system 
$$ \tau_b^{\prime}(x^{\prime})={R^{\prime}}^{-1}(x^{\prime}+b^{\prime}) \ \ \ ( x^{\prime} \in \mathbb{R}^d, b^{\prime} \in B^{\prime})$$
and let $X_{B^{\prime}}$ be its attractor. Assume also that the system has no overlap, i.e.,
$$\mu^{\prime}\left( \tau'_{b^{\prime}_i}(X_{B^{\prime}}) \cap \tau'_{b^{\prime}_j}(X_{B^{\prime}})  \right)=0, \ \ \textnormal{for all} \ \ b^{\prime}_i \neq b^{\prime}_j \ \ \textnormal{in} \ \ B^{\prime}.$$
As we will see, it is not important how we pick the matrix $R'$ and the digits $B'$, the only thing that matters is the number of digits in $B'$; for example we can just take $B'$ to be $\{0,1,\dots,N'-1\}$ and $R'=N'$. We keep a higher level of generality to see how far this generalization goes. We only require that $NN'\geq M$. We can identify $L$ with a subset $L'$ of $B\times B'$, by some injective function $\iota$, in such a way that $0$ from $L$ corresponds to $(0,0)$ from $B\times B'$, and we define $l(b,b')=  l$ if $(b,b')=\iota(l)$, $l(b,b')=0$ if $(b,b')\not\in L'$, and 
$\alpha_{(b,b')}=\alpha_{l}$ if $(b,b')=\iota(l)$ and $\alpha_{(b,b')}=0$ if $(b,b')\not\in L'$. In other words, we complete the matrix $T$ in (ii) with some zero rows, so that the rows are now indexed by $B\times B'$, and of course the isometry property is preserved, and $\alpha_{(0,0)}=0$, $l(0,0)=0$. Thus, the properties (i)--(iii) are satisfied with the indexing set $L$ replaced by $B\times B'$, the numbers $l$ from $L$ replaced by the numbers $l(b,b')$, and the numbers $\alpha_l$ replaced by the numbers $\alpha_{(b,b')}$. 

Next, we construct the numbers $a_{(b,b'),(c,c')}$, $(b,b'),(c,c')\in B\times B'$ with the following properties:
\begin{enumerate}
	\item The matrix 
	\begin{equation}
	\frac{1}{\sqrt{NN'}}\left(a_{(b,b'),(c,c')}e^{2\pi i (R^T)^{-1}l(b,b')\cdot c}\right)_{(b,b'),(c,c')\in B\times B'}
	\label{eqp1.1}
	\end{equation}
	is unitary and the first row is constant $\frac{1}{\sqrt{NN'}}$, so $a_{(0,0),(c,c')}=1$ for all $(c,c')\in B\times B'$.
	\item For all $(b,b')\in B\times B'$, $c\in B$
	\begin{equation}
	\frac{1}{N'}\sum_{c'\in B'}a_{(b,b'),(c,c')}=\alpha_{(b,b')}.
	\label{eqp1.2}
	\end{equation}
\end{enumerate}

\begin{remark}
In \cite{PW15}, the authors begin their construction with the numbers $a_{(b,b'),(c,c')}$. But, as we see here, this is not necessary, we can start with the numbers $\alpha_l$, which are directly connected to the measure $\mu$ and do not involve the auxiliary measure $\mu'$. 
\end{remark}

Let $t_{(b,b^{\prime}),c}=\frac{1}{\sqrt{N}} e^{2 \pi i ({R}^T)^{-1} l(b,b^{\prime}) \cdot c } \alpha_{(b,b^{\prime})}.$ Note that the vectors $\{t_{\cdot, c}\}_{c\in B}$, in $\mathbb{C}^{NN^{\prime}}$ are orthonormal. Therefore, we can define some vectors $t_{\cdot , d}, \ d \in \{ 1, . . . , NN^{\prime}-N \}$ such that   
$$\{ t_{\cdot , c} : c \in B \} \cup \{ t_{\cdot , d} : d \in \{ 1, . . . , NN^{\prime}-N \} \}$$
is an orthonormal basis for $\mathbb{C}^{NN^{\prime}}.$

For $c \in B$ , define the vectors in $\mathbb{C}^{NN^{\prime}}$ by 
$$e_{c}(c_1, {c^{\prime}_1})=\frac{1}{\sqrt{N^{\prime}}} \delta_{cc_1} \ \ \  \left( (c_1, {c^{\prime}_1}) \in B \times B^{\prime}  \right).$$  

It is easy to see that these vectors are orthonormal in $\mathbb{C}^{NN^{\prime}}$, therefore we can complete them to an orthonormal basis for $\bc^{NN'}$ with some vectors $e_d \  , \ d \in \{ 1, . . . , NN^{\prime}-N \}$. Define now 

$$ {s}_{(b,b^{\prime})}=\sum\limits_{c \in B} t_{(b,b^{\prime}), c} e_{c}+\sum\limits_{d=1}^{NN^{\prime}-1} t_{(b,b^{\prime}), d} e_{d}.$$

Since the matrix with columns $t_{\cdot, c}$  and $t_{\cdot, d}$ has orthonormal columns, it is unitary. So it has orthogonal rows. So the vectors $t_{(b,b^{\prime}), \cdot}$ are orthonormal, therefore the vectors ${s}_{(b,b^{\prime})}$ are orthonormal. Also, since  $\alpha_{(0,0)}=1$  and $l_{(0,0)}=0$  we have that $t_{(0,0),c}=\frac{1}{\sqrt{N}}$  for all $c \in B.$ But then 
$$\sum\limits_{c \in B} |t_{(0,0),c}|^2 =1=\| t_{(0,0),\cdot} \|^2.$$
So $t_{(0,0),d}=0$   for $d \in \{ 1, . . . , NN^{\prime}-N \}.$ Therefore, for all $(c_1, {c^{\prime}_1}) \in B \times B^{\prime}:$
$${s}_{(0,0)}(c_1, {c^{\prime}_1})=\sum\limits_{c \in B} \frac{1}{\sqrt{N}} \ \frac{1}{\sqrt{N^{\prime}}} \delta_{cc_1}=\frac{1}{\sqrt{N N^{\prime}}}.$$
The vectors $\{ e_c : c \in B  \}$ span the subspace
$$\mathcal{M}=\{ \left( X{(c,c^{\prime})} \right)_{(c,c^{\prime}) \in B \times B^{\prime} } : \textnormal{$X$ does not depend on} \ c^{\prime}   \}.$$
So the vectors $\{ e_d : d \in \{ 1, . . . , NN^{\prime}-N \} \}$ are ortogonal to $\mathcal{M}.$ Let $P_{\mathcal{M}}$ be the projection onto $\mathcal{M}.$

Note that, for $X \in \mathbb{C}^{N N^{\prime}},$ we have
$$\left( P_{\mathcal{M}}X  \right){(c_1,c_1^{\prime})} = \sum_{c\in B}\ip{X}{e_c}e_c(c_1,c_1')=\frac{1}{N^{\prime}} \sum\limits_{c_1^{\prime} \in B^{\prime}} X{(c_1, c_1^{\prime})}.$$
Also
$$
\left( P_{\mathcal{M}} s_{(b,b^{\prime})} \right)(c_1, c_1^{\prime}) = \sum\limits_{c \in B} t_{(b,b^{\prime}), c} e_c (c_1, c_1^{\prime}) 
=\sum_{c \in B} t_{(b,b^{\prime}), c}\frac1{\sqrt{N'}}\delta_{cc_1} =\frac{1}{\sqrt{N^{\prime}}} t_{(b,b^{\prime}), c_1}.$$
Define now
$$a_{(b,b^{\prime}),(c,c^{\prime})} :=\sqrt{NN^{\prime}} e^{-2 \pi i \left( R^{T} \right)^{-1} l(b,b^{\prime}) \cdot c}  {s}_{(b,b^{\prime})} (c,c^{\prime}).$$
Then we have,
$$a_{(0,0), (c,c^{\prime})}=1 \ \textnormal{for all} \ (c,c^{\prime}).$$
The matrix
$$\frac{1}{\sqrt{N N^{\prime}}} \left( a_{(b,b^{\prime}), (c,c^{\prime})} e^{2 \pi i \left( R^{T} \right)^{-1} l(b,b^{\prime}) \cdot c} \right)_{(b,b^{\prime}), (c,c^{\prime})}$$
is the matrix with rows ${s}_{(b,b^{\prime})}.$ So it is unitary.

$$\frac{1}{N^{\prime}} \sum\limits_{c^{\prime} \in B^{\prime}} a_{(b,b^{\prime}), (c,c^{\prime})} =\frac1{N'}\sum_{c'\in B'}\sqrt{NN'}e^{-2\pi i(R^T)^{-1}l(b,b')\cdot c}s_{(b,b')}(c,c')$$$$
=\sqrt{NN^{\prime}} e^{-2 \pi i \left( R^{T} \right)^{-1} l(b,b^{\prime}) \cdot c} \left( P_{\mathcal{M}} {s}_{(b,b^{\prime})} \right){(c,c^{\prime})} 
=\sqrt{NN^{\prime}}e^{-2 \pi i \left( R^{T} \right)^{-1} l(b,b^{\prime}) \cdot c} \cdot\frac{1}{\sqrt{N'}} t_{(b,b^{\prime}), c} 
=\alpha_{(b,b^{\prime})}.$$
Thus, the conditions (i) and (ii) for the numbers $a_{(b,b'),(c,c')}$ are satisfied. 

Next, we construct some Cuntz isometries $S_{(b,b')}$, $(b,b')\in B\times B'$ in the dilation space $L^2(\mu\times\mu')$ and with them we construct an orthonormal set, by applying the Cuntz isometries to the function $\mathbf 1$.

Define now the maps $\mathcal{R}:X_B \rightarrow X_B$ by 
\begin{equation}
\mathcal{R}x={R}x-b \ \ \textnormal{if} \ \ x \in \tau_{b}(X_{B})
\end{equation}
and ${\mathcal{R}}^{\prime}:X_B^{\prime} \rightarrow X_B^{\prime}$ by 
\begin{equation}
\mathcal{R}^{\prime}x^{\prime}={R}^{\prime}x^{\prime}-b^{\prime} \ \ \textnormal{if} \ \ x^{\prime} \in \tau_{b^{\prime}}(X_{B^{\prime}}).
\end{equation} 
Note that $\mathcal{R}(\tau_b x)=x$ for all $x \in X_B$ and $\mathcal{R}^{\prime}(\tau'_b x^{\prime})=x^{\prime}$ for all $x^{\prime} \in X_{B^{\prime}}$. The non-overlap condition guarantees that the maps are well defined. 

 Next we consider the cartesian product of the two iterated function systems and define the maps
\begin{equation}
\Upsilon_{(b,b^{\prime})}(x,x^{\prime})=\left( R^{-1}(x+b), R'^{-1}(x^{\prime}+b^{\prime})\right)
\end{equation}
for $(x,x^{\prime}) \in \mathbb{R}^d \times \mathbb{R}^{d^{\prime}}$ and $(b,b^{\prime}) \in B \times B^{\prime}.$ Note that the measure $\mu\times\mu'$ is the invariant measure for the  $\{\Upsilon_{(b,b')}\}_{(b,b')\in B\times B'}$.

Define now the filters 
\begin{equation}
m_{(b,b^{\prime})}(x,x^{\prime})=e^{2 \pi i l(b,b^{\prime}) \cdot x} H_{(b,b^{\prime})} (x,x^{\prime}),
\end{equation}
for $(x,x^{\prime}) \in \mathbb{R}^d \times \mathbb{R}^{d^{\prime}}, (b,b^{\prime}) \in B \times B^{\prime},$ where 
$$H_{(b,b^{\prime})}(x,x^{\prime})=\sum\limits_{(c,c^{\prime}) \in B \times B^{\prime}} a_{(b,b^{\prime}),(c,c^{\prime})} \chi_{\Upsilon_{(c,c^{\prime})}(X_B \times X_{B^{\prime}})}(x,x^{\prime}).$$
($\chi_A$ denotes the characteristic function of the set $A$).

With these filters we define the operators $S_{(b,b^{\prime})}$
on $L^{2}(\mu \times \mu^{\prime})$ by
\begin{equation}
\left( S_{(b,b^{\prime})} f \right)(x,x^{\prime})=m_{(b,b^{\prime})}(x,x^{\prime}) f(\mathcal{R}x,\mathcal{R}'x^{\prime}).
\end{equation}

\begin{lemma}
The operators $S_{(b,b')}$, $(b,b')\in B\times B'$ are a representation of the Cuntz algebra $\mathcal O_{NN'}$, i.e., they satisfy the relations in \eqref{eqcuntz}. The adjoint $S_{(b,b')}^*$ is given by the formula
\begin{equation}
(S_{(b,b')}^*f)(x,x')=\frac{1}{NN'}\sum_{(c,c')\in B\times B'}\cj{m_{(b,b')}}(\Upsilon_{(c,c')}(x,x'))f(\Upsilon_{(c,c')}(x,x')),
\label{eqadj}
\end{equation}
for $f\in L^2(\mu\times\mu'), (x,x')\in X_B\times X_{B'}$.
\end{lemma}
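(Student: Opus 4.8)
The plan is to derive the adjoint formula \eqref{eqadj} first, and then read off both Cuntz relations directly from the unitarity of the matrix \eqref{eqp1.1} together with the no-overlap condition. Since $\mu\times\mu'$ is the invariant measure for $\{\Upsilon_{(c,c')}\}_{(c,c')\in B\times B'}$, it satisfies the transfer identity $\int F\,d(\mu\times\mu')=\frac{1}{NN'}\sum_{(c,c')}\int F\circ\Upsilon_{(c,c')}\,d(\mu\times\mu')$. I would apply this to $F=(S_{(b,b')}f)\,\cj{g}$ and use the key fact, coming from the definitions of $\mathcal R,\mathcal R'$, that $(\mathcal R,\mathcal R')$ inverts $\Upsilon_{(c,c')}$, i.e. $f(\mathcal R\,\cdot,\mathcal R'\,\cdot)\circ\Upsilon_{(c,c')}=f$, so that the factor $f(\mathcal R x,\mathcal R'x')$ collapses to $f(x,x')$ inside each summand. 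Moving the remaining factors onto $g$ and conjugating yields exactly \eqref{eqadj}; here the no-overlap condition is precisely what makes $\mathcal R,\mathcal R'$ well defined off a null set.

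The central computation, on which everything rests, is the evaluation of a filter on a single piece. Because the sets $\Upsilon_{(c,c')}(X_B\times X_{B'})$ tile $X_B\times X_{B'}$ up to a $\mu\times\mu'$-null set (again by no-overlap), the characteristic functions defining $H_{(b,b')}$ single out one term, and using $l\cdot R^{-1}y=(R^T)^{-1}l\cdot y$ I would record
$$m_{(b,b')}(\Upsilon_{(c,c')}(x,x'))=e^{2\pi i(R^T)^{-1}l(b,b')\cdot x}\,\Bigl(\sqrt{NN'}\,U_{(b,b'),(c,c')}\Bigr),$$
where $U$ is the unitary matrix \eqref{eqp1.1}, with entries $U_{(b,b'),(c,c')}=\frac{1}{\sqrt{NN'}}a_{(b,b'),(c,c')}e^{2\pi i(R^T)^{-1}l(b,b')\cdot c}$. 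The essential point is that the $(c,c')$-dependence of the filter is exactly a row of $U$, times an $x$-phase that does not depend on $(c,c')$.

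With this in hand both relations become bookkeeping. For $S_{(b,b')}^*S_{(d,d')}$ I would apply \eqref{eqadj} to $S_{(d,d')}f$, collapse the inner $f(\mathcal R\,\cdot,\mathcal R'\,\cdot)$ to $f(x,x')$ as before, and substitute the filter values; the two $x$-phases combine to $e^{2\pi i(R^T)^{-1}(l(d,d')-l(b,b'))\cdot x}$ and factor out of the $(c,c')$-sum, which becomes $(UU^*)_{(d,d'),(b,b')}=\delta_{(b,b'),(d,d')}$, and when the indices agree the leftover phase is $1$, giving $S_{(b,b')}^*S_{(d,d')}=\delta_{(b,b'),(d,d')}I$. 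For $\sum_{(b,b')}S_{(b,b')}S_{(b,b')}^*=I$ I would fix $(x,x')$ in its a.e.-unique piece $\Upsilon_{(c_0,c_0')}(X_B\times X_{B'})$, set $(y,y')=(\mathcal R x,\mathcal R'x')$ so that $(x,x')=\Upsilon_{(c_0,c_0')}(y,y')$, and expand. Now both filters sit over the common base point $(y,y')$, the $y$-phases cancel, and the sum over $(b,b')$ becomes $NN'\,(U^*U)_{(c,c'),(c_0,c_0')}=NN'\,\delta_{(c,c'),(c_0,c_0')}$; this collapses the $(c,c')$-sum to the single term $(c,c')=(c_0,c_0')$, returning $f(x,x')$.

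The main obstacle is not any single computation but keeping the measure-theoretic housekeeping honest: one must check that no-overlap makes $\mathcal R,\mathcal R'$ defined off a $\mu\times\mu'$-null set, that the pieces $\Upsilon_{(c,c')}(X_B\times X_{B'})$ partition the product attractor up to measure zero so that a.e. point has a well-defined address $(c_0,c_0')$, and that the boundary overlaps are negligible throughout these manipulations. Once that is pinned down, the two Cuntz relations reduce cleanly to the two unitarity identities $UU^*=I$ and $U^*U=I$ for the matrix \eqref{eqp1.1}.
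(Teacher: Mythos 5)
Your proposal is correct and follows essentially the same route as the paper: the adjoint formula is obtained from the invariance equation for $\mu\times\mu'$ exactly as in the text, and both Cuntz relations are reduced to the unitarity of the matrix in \eqref{eqp1.1} via the identification of $m_{(b,b')}(\Upsilon_{(c,c')}(x,x'))$ with a phase times that matrix's entries. The only difference is one of detail: you carry out explicitly the ``simple computation'' that the paper invokes without proof (the equivalence of the Cuntz relations with pointwise unitarity of the filter matrix), making visible that the relation $S_{(b,b')}^*S_{(d,d')}=\delta_{(b,b'),(d,d')}I$ uses the row identity $UU^*=I$ while the completeness relation $\sum_{(b,b')}S_{(b,b')}S_{(b,b')}^*=I$ uses the column identity $U^*U=I$.
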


\begin{proof}
First, we compute the adjoint, using the invariance equations for $\mu\times\mu'$.
$$\ip{S_{(b,b')}f}{g}=\int m_{(b,b')}(x,x')f(\mathcal R x,\mathcal R' x')\cj g(x,x')\,d(\mu\times\mu')$$$$=\frac{1}{NN'}\sum_{(c,c')}\int m_{(b,b')}(\Upsilon_{(c,c')}(x,x'))f(x,x')\cj g(\Upsilon_{(c,c')}(x,x'))\,d(\mu\times\mu'),$$
and this proves \eqref{eqadj}.

A simple computation shows that the Cuntz relations are equivalent to the following matrix being unitary for all $(x,x^{\prime}) \in X_B\times X_{B'}$:
\begin{equation}
\frac{1}{\sqrt{N N^{\prime}}} \left( m_{(b,b^{\prime})}  \left( \Upsilon_{(c,c^{\prime})} (x,x^{\prime}) \right)  \right)_{(b,b^{\prime}),(c,c^{\prime}) \in  B \times B^{\prime}}.
\end{equation}
This means that 
$$\frac{1}{\sqrt{N N^{\prime}}} \left(e^{2 \pi i l(b,b^{\prime}) \cdot {{R}}^{-1}(x+c)} a_{(b,b^{\prime}),(c,c^{\prime})} \right)_{(b,b^{\prime}),(c,c^{\prime}) \in  B \times B^{\prime}}.$$
should be unitary for all $x \in \mathbb{R}^d.$
Equivalently, for all  $(c_1,c_1^{\prime}), (c_2,c_2^{\prime}) \in B \times B^{\prime}$,
\begin{equation*}
\begin{split}
\delta_{(c_1, c_1^{\prime}), (c_2,c_2^{\prime})} &= \frac{1}{N N^{\prime}} \sum\limits_{(b,b^{\prime}) \in B \times B^{\prime}} e^{2 \pi i l(b,b^{\prime}) \cdot R^{-1}(x+c_{1})} a_{(b,b^{\prime}),(c_{1},c_1^{\prime})} \cdot e^{-2 \pi i l(b,b^{\prime}) \cdot R^{-1}(x+c_{2})} \cj{a}_{(b,b^{\prime}),(c_{2},c_2^{\prime})}\\
&=\frac{1}{N N^{\prime}} \sum\limits_{(b,b^{\prime}) \in B \times B^{\prime}} e^{2 \pi i l(b,b^{\prime}) \cdot R^{-1}c_{1}} a_{(b,b^{\prime}),(c_{1},c_1^{\prime})} \cdot e^{-2 \pi i l(b,b^{\prime}) \cdot R^{-1}c_2} a_{(b,b^{\prime}),(c_{2},c_2^{\prime})}.
\end{split}
\end{equation*}
which is true, by \eqref{eqp1.1}.

\end{proof}

For a word $\omega=(b_1,b_1^{\prime}) . . . (b_k,{b_k}^{\prime})$ we compute
$$(S_{\omega}\mathbf{1})(x,x') =(S_{(b_1,b_1^{\prime})} . . . S_{(b_k,b_k^{\prime})} \mathbf{1})(x,x^{\prime}) 
=S_{(b_1,{b_1}^{\prime})} . . . S_{(b_{k-1},{b_{k-1}^{\prime})}} e^{2 \pi i  l (b_k,{b^{\prime}}_k) \cdot x} H_{(b_k,b_k^{\prime})}(x,x^{\prime})$$
$$=S_{(b_1,b_1^{\prime})} . . . S_{(b_{k-2},b_{k-2}^{\prime})} e^{2 \pi i  l (b_{k-1},{b^{\prime}}_{k-1}) \cdot x} \cdot e^{2 \pi i  l (b_{k},b_k^{\prime}) \cdot \mathcal{R}x}H_{(b_{k-1},b_{k-1}^{\prime})}(x,x^{\prime}) H_{(b_{k},b_k^{\prime})}(\mathcal{R}x,\mathcal{R}^{\prime}x^{\prime})
=\dots=$$$$e^{2 \pi i \left( l(b_{1},b_1^{\prime}) \cdot x + l(b_{2},b_2^{\prime}) \cdot \mathcal{R}x + . . . + l(b_{k},b_k^{\prime}) \cdot \mathcal R^{k-1}x  \right)} H_{(b_{1},b_1^{\prime})}(x,x^{\prime}) H_{(b_{2},b_2^{\prime})}(\mathcal{R}x,\mathcal{R}^{\prime}x^{\prime}) \ . \ . \ . \ H_{(b_{k-1},b_{k-1}^{\prime})}(\mathcal{R}^{k-1}x,{\mathcal{R}^{\prime}}^{k-1}x^{\prime}) .$$

Since $l(b_1,b_1^{\prime}) \cdot {R}^{k}b_2 \in \mathbb{Z}$ for all $(b,b^{\prime}) \in  B \times B^{\prime}, k \geq 0, b_2 \in B,$ we get that the first term in the above product is 
$$
e^{2 \pi i \left( l(b_1,b_1^{\prime})+{R}^{T}l(b_2,b_2^{\prime}) + . . . + ({R}^{T})^{k-1} l(b_k,b_k^{\prime})  \right) \cdot x}.
$$

Next we will compute the projection $P_{V}S_{\omega} \mathbf{1},$ onto the the subspace $$V=\{ f(x,y)=g(x) : g \in L^2(\mu) \}.$$ It is easy to see that the projection onto $V$ is given by the formula
$$(P_Vf)(x)=\int f(x,x')\,d\mu'(x'),\quad (f\in L^2(\mu\times\mu')).$$

For this we compute:
$$
A:=\int H_{(b_1,b_1^{\prime})}(x,x^{\prime}) . \ . \ . \ H_{(b_{k},b_k^{\prime})}(\mathcal{R}^{k-1}x,{\mathcal{R}^{\prime}}^{k-1}x^{\prime}) \ d{\mu}^{\prime}(x^{\prime})
=( \textnormal{ using the invariance equation for} \ \  \mu^{\prime}) $$$$
=\frac{1}{N^{\prime}} \sum\limits_{c^{\prime} \in B^{\prime}} \int H_{(b_1,b_1^{\prime})}(x,\tau'_{c'}x^{\prime}) . \ . \ . \ H_{(b_{k},b_k^{\prime})}(\mathcal{R}^{k-1}x,{\mathcal{R}^{\prime}}^{k-1}\tau'_{c'}x^{\prime})  d{\mu}^{\prime}(x^{\prime}).$$
We have 
$$\frac{1}{N^{\prime}} \sum\limits_{c^{\prime} \in B^{\prime}} H_{(b_1,b_1^{\prime})}(x,\tau'_{c'}x')=\frac{1}{N^{\prime}} \sum\limits_{c^{\prime} \in B^{\prime}} a_{(b_1,b_1^{\prime}), (b(x),c^{\prime})}=\alpha_{(b,b')}$$
where $b(x)=b$ if $x \in \tau_b(X_B).$ 

Then we obtain further, by induction 

$$A=\alpha_{(b_1,b_1')}\int  H_{(b_2,b_2^{\prime})}(\mathcal{R} x,x^{\prime}) \dots H_{(b_k,b_k^{\prime})} (\mathcal{R}^{k-1}x,{\mathcal{R}^{\prime}}^{k-2}x') \  d{\mu}^{\prime}(x^{\prime}) = \dots=\prod\limits_{j=1}^{k} \alpha_{(b_j,b_j')}.$$

So
\begin{equation}
P_{V}S_{\omega}\mathbf{1}(x,x^{\prime})=e^{2 \pi i \left( l(b_1,b_1^{\prime}) + {R}^{T}l(b_2,b_2^{\prime})+ \dots + {({R}^{T}})^{k-1} l(b_k,b_k^{\prime}) \right) \cdot x} \cdot \prod\limits_{j=1}^{k} \alpha_{(b_j,b_j')}. 
\label{eq1.18}
\end{equation}

\begin{remark}\label{rem2.2}
Note that in order for the functions $P_VS_\omega\mathbf 1$ to be just some weighted exponential functions the restriction \eqref{eqp1.2} that we have on the numbers $a_{(b,b'),(c,c')}$ is necessary; since we want  
this function to be just a weighted exponential function, we should have that
$$\frac{1}{N^{\prime}} \sum\limits_{c^{\prime} \in B^{\prime}} a_{(b , b^{\prime}), (b(x),c^{\prime})}$$
is independent of $x,$ which means that 
\begin{equation}
\frac{1}{N^{\prime}} \sum\limits_{c^{\prime} \in B^{\prime}} a_{(b , b^{\prime}), (c,c^{\prime})}=\alpha_{(b , b^{\prime})}
\end{equation}
should be independent of $c$, and this is guaranteed by \eqref{eqp1.2}. So it is not enough to make the matrix in \eqref{eqp1.1} unitary, we need also the condition in \eqref{eqp1.2}. Equation \eqref{eqp1.1} implies the Cuntz relations, equation \eqref{eqp1.2} guarantees that the projections are weighted exponential functions. 
\end{remark}

We compute now  for $e_t(x)=e_{(t,0)}(x,x')$: 
$$
\left( {S^*}_{(b,b^{\prime})} (e_{t}) (x,x^{\prime})  \right) = \frac{1}{N N^{\prime}} \sum\limits_{(c,c^{\prime}) \in B \times B^{\prime}} \cj{m}_{(b,b^{\prime})} (\tau_c x , \tau'_{c'} x^{\prime})  e_{t} (\tau_c x ) $$$$
=\frac{1}{NN^{\prime}} \sum\limits_{(c,c^{\prime}) \in B \times B^{\prime}} e^{-2 \pi i  l(b,b^{\prime}) \cdot {{R}}^{-1} (x+c)} \cj{a}_{(b,b^{\prime}) , (c,c^{\prime})}  e^{2 \pi i t \cdot {{R}}^{-1} (x+c)} $$$$
=\frac{1}{N N^{\prime}} \sum\limits_{(c,c^{\prime})} e^{2 \pi i(t - l(b,b^{\prime})) \cdot {{R}}^{-1}c} \cj{a}_{(b,b^{\prime}) , (c,c^{\prime})} \cdot e^{2 \pi i (t- l(b,b^{\prime})) \cdot {{R}}^{-1}x} $$
$$=\left( \frac{1}{N} \sum\limits_{c \in B} \left( \frac{1}{N^{\prime}} \sum\limits_{c^{\prime} \in B^{\prime}} \cj{a}_{(b,b^{\prime}),(c,c^{\prime})}  \right)  e^{2 \pi i ({{R}}^{T})^{-1} (t-l(b,b^{\prime})) \cdot c} \right) \cdot e_{({{R}}^{T})^{-1} (t-l(b,b^{\prime}))}(x)$$
$$=\left( \frac{1}{N} \sum\limits_{c \in B} \cj{\alpha}_{(b,b^{\prime})} \cdot e^{2 \pi i ({{R}}^{T})^{-1} (t-l(b,b^{\prime})) \cdot c}  \right) \cdot e_{({{R}}^{T})^{-1} (t-l(b,b^{\prime}))}(x).$$

So
\begin{equation}
{S^{*}}_{(b,b^{\prime})} e_t =\cj{\alpha}_{(b,b^{\prime})} m_B\left( ({R}^{T})^{-1} (t-l(b,b^{\prime}))\right)  e_{({{R}}^{T})^{-1} (t-l(b,b^{\prime}))}.
\end{equation}

Define 

$$g_{(b,b^{\prime})}(x)=({{R}}^{T})^{-1} (x-l(b,b^{\prime})) \ \   ( (b,b^{\prime}) \in B \times B^{\prime} ).$$
Then
\begin{equation}
{S^{*}}_{(b,b^{\prime})} e_t = \cj{\alpha}_{(b,b^{\prime})} {m}_{B}({g_{(b,b^{\prime})}} (t))e_{g_{(b,b')}(t)}.
\label{eq1.12}
\end{equation}

Define $\Omega(B\times B')$ to be the empty word and all the finite words $\omega=(b_0,b_0')\dots (b_k,b_k')$ in $(B\times B')^k$, $k\in\bn$ that do not end in $(0,0)$, i.e., $(b_k,b_k')\neq(0,0)$. It is easy to see, using the Cuntz relations and the fact that $S_{(0,0)}\mathbf 1=\mathbf 1$, that the family of  vectors $\mathcal E:=\{S_\omega\mathbf 1 : \omega\in \Omega(B\times B')\}$ is orthonormal. Note also that 
$$\mathcal E=\cup_{(b,b')\in B\times B'}S_{(b,b')}\mathcal E,$$
and the union is disjoint. 

We want to prove that the subspace $V$ is contained in the closed span of $\{S_\omega\mathbf 1 : \omega\in \Omega(B\times B')\}$, which we denote by $\mathcal K$. 

Using the Cuntz relations and \eqref{eq1.12}, we have 
$$
1=\| e_t \|^2 =\sum\limits_{(b,b^{\prime}) \in B \times B^{\prime}} \langle\ S_{(b,b^{\prime})} {S_{(b,b^{\prime})}^{*}}e_t, e_t \rangle 
=\sum\limits_{(b,b^{\prime}) \in B \times B^{\prime}} \| {S_{(b,b^{\prime})}^{*}}e_t \|^2 $$$$ 
=\sum\limits_{(b,b^{\prime}) \in B \times B^{\prime}} | \alpha_{(b,b^{\prime})} |^{2} |m_{B} ( {g_{(b,b^{\prime})}} (t) ) |^2,
$$
So
\begin{equation}
1=\sum\limits_{(b,b^{\prime}) \in B \times B^{\prime}} | \alpha_{(b,b^{\prime})} |^{2} |m_{B} ( {g_{(b,b^{\prime})}}(t) ) |^2.
\label{eq1.22}
\end{equation}

Define the function $f:\br^d\rightarrow V$ by $f(t)=e_t$, where $e_t(x,x')=e^{2\pi i x\cdot t}$. Note that $f(0)=\mathbf 1\in\mathcal K$. Also, we define the function $h:\br^d\rightarrow\br$ by
\begin{equation}
h(t)=\sum_{\omega\in\Omega(B\times B')}|\ip{f(t)}{S_\omega\mathbf 1}|^2=\|P_{\mathcal K}f(t)\|^2.
\label{eqh}
\end{equation}

Note that $0\leq h\leq 1$, $h(0)=1$ and $h$ can be extended to $\bc^d$ by
$$h(z)=\sum_{\omega\in\Omega(B\times B')}|\ip{e_z}{S_\omega\mathbf 1}|^2=\|P_{\mathcal K}e_z\|^2,$$
where, for $z\in \bc^d$, $e_z(x,x')=e^{2\pi i x\cdot z}$. We will prove that $h(z)$ is an entire function.

For a fixed $\omega\in\Omega(B\times B')$, define $f_\omega:\bc^d\rightarrow\bc$ by
$$f_\omega(z)=\int e^{2\pi i z\cdot x}\cj{S_\omega\mathbf 1(x,x')}\,d\mu(x)\,d\mu'(x').$$
Since the measure $\mu\times\mu'$ is compactly supported, a standard convergence argument shows that the function $f_\omega$ is entire. Similarly, $f_\omega^*(z):=\cj{f_\omega(\cj z)}$ is entire, and for real $t$,
$$f_\omega(t)f_\omega^*(t)=\ip{e_t}{S_\omega\mathbf 1}\cdot\cj{\ip{e_t}{S_\omega\mathbf 1}}=|\ip{e_t}{S_\omega\mathbf 1}|^2.$$
Thus,
$h(t)=\sum_{\omega\in\Omega(B\times B')}f_\omega(t)f_\omega^*(t).$
For $n\in\bn$, let $h_n(z)=\sum_{|\omega|\leq n}f_{\omega}(z)f_\omega^*(z)$, which is entire. By H\" older's inequality
$$\sum_{\omega\in\Omega(B\times B')}|f_\omega(z)f_\omega^*(z)|\leq\left(\sum_{\omega}|\ip{e_z}{S_\omega\mathbf 1}|^2\right)^{\frac12}\left(\sum_{\omega}|\ip{e_{\cj z}}{S_\omega\mathbf 1}|^2\right)^{\frac12}$$
$$\leq \|e_z\|\|e_{\cj z}\|\leq e^{K\sqrt{(\Im z_1)^2+\dots+(\Im z_d)^2}},$$
for some constant $K$. Thus, the sequence $h_n(z)$ converges pointwise to the function $h(z)$, and is uniformly bounded on bounded sets. By the theorems of Montel and Vitali, the limit function is entire. 

We prove that $h$ satisfies \eqref{eqt1.3}:
$$h(t)=\sum_{v\in \mathcal E}\left|\ip{v}{e_t}\right|^2=\sum_{(b,b')\in B\times B'}\sum_{v\in\mathcal E}\left|\ip{S_{(b,b')}v}{e_t}\right|^2=\sum_{(b,b')\in B\times B'}\sum_{v\in\mathcal E}\left|\ip{v}{S_{(b,b')}^*e_t}\right|^2$$
$$=\sum_{(b,b')}\sum_{v}\left|\ip{v}{\alpha_{(b,b')}m_B(g_{(b,b')}(t))e_{g_{(b,b')}(t)}}\right|^2=
\sum_{(b,b')}|\alpha_{(b,b')}|^2|m_B(g_{(b,b')}(t))|^2h(g_{(b,b')}(t)).
$$

By \eqref{eqt1.5}, we get that $h$ has to be constant, and since $h(0)=1$ we obtain that $h\equiv 1$. But this implies that $\|P_{\mathcal K}e_t\|=1=\|e_t\|$ and therefore $e_t$ is in $\mathcal K$ for all $t\in\br^d$. Since, by the Stone-Weierstrass theorem the functions $e_t$ span $V$, we get that $V$ is a subspace of $\mathcal K$. Then, with Lemma \ref{lemframe}, we obtain that $P_VS_\omega\mathbf1$, $\omega\in\Omega(B\times B')$ forms a Parseval frame for $V$. Going back to the index set $L$ and discarding the zeroes, we obtain the conclusion.

\end{proof}

\section{Further remarks}

\begin{remark}\label{remcomp}
Suppose that the conditions (i) and (ii) in Theorem \ref{th1} are satisfied. Suppose in addition that the set in \eqref{eqt1.5} spans $L^2(\mu)$. Then this set is a Parseval frame.

Indeed, if we follow the proof of Theorem \ref{th1}, we see that the function $h$ in \eqref{eqh} is the constant 1, and that is what was needed in the proof. 
\end{remark}

\begin{corollary}\label{corbigger}
Suppose we have a set $L$ and some complex numbers $\{\alpha_l\}$, such that the conditions (i)--(iii) in Theorem \ref{th1} are satisfied. Suppose now that $L'$ is a set that contains the set $L$ and $\{\alpha_l' :l\in L'\}$ are some nonzero numbers, such that the conditions (i) and (ii) are satisfied. Then the conclusion of Theorem \ref{th1} holds for $L'$ and $\{\alpha_l'\}$. 
\end{corollary}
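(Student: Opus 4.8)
The plan is to reduce the statement to Remark \ref{remcomp}: once conditions (i) and (ii) hold for a set together with some coefficients, the associated family \eqref{eqt1.5} is automatically a Parseval frame as soon as it spans $L^2(\mu)$. Since (i) and (ii) are assumed for $L'$ and $\{\alpha_l'\}$, the entire task is to verify that the weighted exponentials \eqref{eqt1.5} built from $L'$ span $L^2(\mu)$; everything else is then supplied by Remark \ref{remcomp}.

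First I would strip off the scalar weights. Because every $\alpha_l'$ is nonzero, each coefficient $\prod_{i=0}^k\alpha_{l_i}'$ attached to a word $l_0\dots l_k\in\Omega(L')$ is nonzero, so multiplying by it does not change any closed span. Hence the closed span of the $L'$-family equals $\Span\{e_\lambda : \lambda\in\Lambda(L')\}$, where $\Lambda(L')$ denotes the set of frequencies $l_0+R^Tl_1+\dots+(R^T)^kl_k$ coming from words $l_0\dots l_k\in\Omega(L')$.

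Next I would feed in the hypothesis on $L$. Since $L$ and $\{\alpha_l\}$ satisfy (i)--(iii), Theorem \ref{th1} makes the $L$-family \eqref{eqt1.5} a Parseval frame, hence complete in $L^2(\mu)$. Its members are nonzero scalar multiples of $e_\lambda$ precisely for those frequencies whose coefficient product does not vanish; writing $\Lambda_0(L)$ for that set of frequencies, completeness reads $\Span\{e_\lambda : \lambda\in\Lambda_0(L)\}=L^2(\mu)$. Because $L\subset L'$ forces $\Omega(L)\subset\Omega(L')$, and a word yields the same frequency whether read in $\Omega(L)$ or in $\Omega(L')$, we have $\Lambda_0(L)\subset\Lambda(L)\subset\Lambda(L')$. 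Combining this with the previous step,
$$L^2(\mu)=\Span\{e_\lambda : \lambda\in\Lambda_0(L)\}\subset\Span\{e_\lambda : \lambda\in\Lambda(L')\},$$
and the right-hand side is the closed span of the $L'$-family; thus that family spans $L^2(\mu)$, and Remark \ref{remcomp} completes the proof.

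The argument is essentially bookkeeping, and the only point that genuinely requires care is the passage through $\Lambda_0(L)$: the original coefficients $\{\alpha_l\}$ need not all be nonzero, so some members of the $L$-frame may vanish and one must discard them before comparing frequency sets. This is also exactly where the hypothesis that every $\alpha_l'$ is nonzero enters — it guarantees that enlarging $L$ to $L'$ only enlarges (never thins out) the available frequencies, so that completeness is inherited rather than potentially lost.
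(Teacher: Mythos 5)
Your proof is correct and takes essentially the same route as the paper: apply Theorem \ref{th1} to $L$ to conclude the exponentials it generates span $L^2(\mu)$, note that passing from $L$ to $L'\supset L$ only enlarges the frequency set, and invoke Remark \ref{remcomp}. Your extra bookkeeping --- introducing $\Lambda_0(L)$ to discard zero-weight members of the $L$-frame, and using that the $\alpha_l'$ are nonzero to identify the closed span of the weighted $L'$-family with that of the bare exponentials --- just makes explicit two points the paper's three-line proof leaves implicit.
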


\begin{proof}
Theorem \ref{th1} shows that $\{e_{l_0+R^Tl_1+\dots +(R^T)^kl_k} :l_0\dots l_k\in\Omega(L)\}$ spans $L^2(\mu)$. But then $\{e_{l_0+R^Tl_1+\dots +(R^T)^kl_k} :l_0\dots l_k\in\Omega(L')\}$ is a bigger set so it also spans $L^2(\mu)$. The rest follows from Remark \ref{remcomp}.
\end{proof}

\begin{proposition}\label{pr1.10}
With the notations in Theorem \ref{th1}, suppose the conditions (i) and (ii) are satisfied. Then for all $l\neq 0$ in $L$,
\begin{equation}
\alpha_{l}=0 \ \textnormal{or} \ \sum\limits_{b \in B} e^{2 \pi i (R^T)^{-1}l \cdot b}=0.
\label{eq1.10.1}
\end{equation}
In particular $L$ cannot contain numbers $l\neq 0$ with $l\cdot R^{-1}b\in\bz$ for all $b\in B$, which implies also that if $l\neq 0$ is in $L$, then $l\not\equiv 0(\mod R^T\bz)$. 
\end{proposition}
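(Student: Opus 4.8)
The plan is to exploit the isometry $T$ from condition (ii) by feeding it the single most convenient vector, the all-ones vector, and to recognize the sum appearing in \eqref{eq1.10.1} as essentially a value of $m_B$. First I would record the identity
$$\sum_{b\in B}e^{2\pi i (R^T)^{-1}l\cdot b}=N\,m_B((R^T)^{-1}l),$$
which is immediate from the definition \eqref{eqt1.4} together with the symmetry $(R^T)^{-1}l\cdot b=l\cdot R^{-1}b$. This reduces the desired dichotomy to the statement that, for every $l\neq 0$ in $L$, either $\alpha_l=0$ or $m_B((R^T)^{-1}l)=0$.

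Next I would compute $T\mathbf 1$ coordinatewise, where $\mathbf 1=(1)_{b\in B}\in\bc^N$. Using the entries in \eqref{eqt1.1}, the $l$-th coordinate is
$$(T\mathbf 1)_l=\frac{1}{\sqrt N}\,\alpha_l\sum_{b\in B}e^{2\pi i (R^T)^{-1}l\cdot b}=\sqrt N\,\alpha_l\,m_B((R^T)^{-1}l).$$
Since $T^*T=I_N$, the map $T$ is norm preserving, so $\|T\mathbf 1\|^2=\|\mathbf 1\|^2=N$. Writing $\|T\mathbf 1\|^2$ out over the coordinates and dividing by $N$ produces the single scalar identity $\sum_{l\in L}|\alpha_l|^2\,|m_B((R^T)^{-1}l)|^2=1$. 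The decisive observation is that the $l=0$ term already saturates this sum: by (i) $\alpha_0=1$, and $m_B(0)=\frac1N\sum_{b\in B}1=1$, so that single term equals $1$. As every remaining summand is nonnegative, each term with $l\neq 0$ must vanish, i.e. $|\alpha_l|^2|m_B((R^T)^{-1}l)|^2=0$, which is exactly \eqref{eq1.10.1}.

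For the ``in particular'' statement, suppose $l\neq 0$ satisfies $l\cdot R^{-1}b\in\bz$ for every $b\in B$. Then each exponential $e^{2\pi i (R^T)^{-1}l\cdot b}=e^{2\pi i l\cdot R^{-1}b}$ equals $1$, so $m_B((R^T)^{-1}l)=1\neq 0$; the dichotomy then forces $\alpha_l=0$, so such an $l$ carries zero weight and contributes nothing to the frame (equivalently, once the weights are required to be nonzero, no such $l$ can belong to $L$). Finally, $l\equiv 0\ (\operatorname{mod}R^T\bz^d)$ means $l=R^Tk$ with $k\in\bz^d$, whence $(R^T)^{-1}l\cdot b=k\cdot b\in\bz$ for all $b\in B$; this is a special case of the previous condition, so no nonzero $l\in L$ can lie in $R^T\bz^d$.

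I expect no serious obstacle: the only genuinely clever move is to apply the isometry to $\mathbf 1$ and notice that the $l=0$ row alone accounts for the entire norm. The single point requiring care is the logical reading of ``cannot contain,'' which I would phrase as the weight $\alpha_l$ being forced to $0$ rather than as an a priori set-theoretic exclusion.
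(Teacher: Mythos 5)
Your proof is correct, but it takes a genuinely different route from the paper's. The paper proves the dichotomy by invoking the construction inside the proof of Theorem \ref{th1}: it extends the index set to $B\times B'$, builds the numbers $a_{(b,b'),(c,c')}$, and uses the fact that the matrix in \eqref{eqp1.1} is unitary with constant first row; orthogonality of the row indexed by $(b,b')$ against the first row, combined with \eqref{eqp1.2}, yields
$0=N'\alpha_{(b,b')}\sum_{c\in B}e^{2\pi i (R^T)^{-1}l(b,b')\cdot c}$,
which is \eqref{eq1.10.1}. You instead work only with the isometry $T$ of condition (ii): applying $T$ to the all-ones vector and using $\|T\mathbf 1\|^2=\|\mathbf 1\|^2$ gives
$\sum_{l\in L}|\alpha_l|^2\,|m_B((R^T)^{-1}l)|^2=1$,
and since the $l=0$ term already equals $1$ (by $\alpha_0=1$ and $m_B(0)=1$), positivity forces every other term to vanish. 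This is cleaner in an important respect: the proposition's hypotheses are just (i) and (ii), and your argument uses exactly those, with no appeal to the auxiliary system $(R',B')$, the completion to orthonormal bases, or the existence of the $a_{(b,b'),(c,c')}$ --- machinery whose construction occupies a page of the paper. Your identity is precisely condition (ii) of Proposition \ref{preq} evaluated at $t=0$, so your proof also makes the link between Proposition \ref{pr1.10} and Proposition \ref{preq} transparent, whereas the paper derives the two independently. What the paper's route buys instead is narrative: it exhibits the dichotomy as a consequence of the unitarity needed for the Cuntz relations (cf.\ Remark \ref{rem1.11}), reinforcing that condition (ii) is necessary for the whole construction. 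Finally, your reading of the ``in particular'' clause (that the dichotomy forces $\alpha_l=0$ for any such $l$, so such digits are excluded once weights are required to be nonzero) is the right one; the paper leaves this step implicit.
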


\begin{proof}
Using the proof of Theorem \ref{th1}, since the matrix in \eqref{eqp1.1} is unitary and the first row is constant we have that 
\begin{equation*}
\begin{split}
0&=\sum\limits_{(c,c^{\prime}) \in B \times B^{\prime}} e^{2 \pi i (R^T)^{-1}l(b,b^{\prime}) \cdot c} a_{(b,b^{\prime}),(c,c^{\prime})} \cdot {1} \\
&=\sum\limits_{c \in B} e^{2 \pi i (R^T)^{-1}l(b,b^{\prime}) \cdot c} \sum\limits_{c^{\prime} \in B^{\prime}} a_{(b,b^{\prime}),(c,c^{\prime})} \\ 
&=\sum\limits_{c \in B} e^{2 \pi i (R^T)^{-1}l(b,b^{\prime}) \cdot c} \cdot N^{\prime} \alpha_{(b,b^{\prime})}.
\end{split}
\end{equation*}

\end{proof}

\begin{remark}\label{rem1.11}
The condition (ii) in Theorem \ref{th1} is necessary for this construction to work. 
Suppose the numbers $a_{(b,b'),(c,c')}$ and $l(b,b')$ satisfy \eqref{eqp1.1} and \eqref{eqp1.2} (see also Remark \ref{rem2.2}). Equation \eqref{eqp1.1} is necessary for the Cuntz relations, and equation \eqref{eqp1.2} is necessary for the projections of the functions $S_\omega\mathbf 1$ to be weighted exponential functions. Then the matrix 
$$\frac{1}{\sqrt{N}}\left(e^{2\pi i (R^T)^{-1}l(b,b')\cdot c}\alpha_{(b,b')}\right)_{(b,b')\in B\times B', c\in B}$$
is an isometry, as in \eqref{eqt1.1}.

\begin{proof}

From \eqref{eq1.22} we get 
\begin{equation*}
\begin{split}
1&=\sum\limits_{(b,b^{\prime}) \in B \times B^{\prime}} | \alpha_{(b,b^{\prime})} |^{2}  \frac{1}{N^2} \sum\limits_{c_1,c_2\in B} e^{2 \pi i ( {{R}}^{T}  )^{-1}  (t - l(b,b^{\prime})) \cdot (c_1 - c_2)   } \\ 
&=\sum\limits_{k \in B-B} e^{2 \pi i ( {{R}}^{T} )^{-1} t \cdot k   } \frac{1}{N^2} \sum\limits_{\substack {c_1 , c_2 \in B \\ c_1-c_2=k}} \sum\limits_{(b,b^{\prime})\in B\times B'} e^{-2 \pi i ( {{R}}^{T}  )^{-1}  l(b,b^{\prime}) \cdot k} |\alpha_{(b,b^{\prime})}|^2.
\end{split}
\end{equation*}
We have two trigonometric polynomials which are equal to each other and therefore their coefficients must be equal. \\
For $k=0$ we get 
$$1=\frac{1}{N} \sum\limits_{(b,b^{\prime})} |\alpha_{(b,b^{\prime})}|^2.$$
For $k \neq 0$ we get 
$$0=\frac{1}{N^2} \#{\{ (c_1 , c_2) \in B \times B : c_1 - c_2 =k \}} \sum\limits_{(b,b^{\prime})} e^{-2 \pi i (R^T)^{-1}l(b,b^{\prime}) \cdot k} |\alpha_{(b,b^{\prime})}|^2,$$
which implies \eqref{eqt1.1}.
\end{proof}

Also, the condition $\alpha_0=0$ is necessary for this construction. We want $S_0\mathbf 1=\mathbf 1$, to make sure that the set $\mathcal E=\{S_\omega \mathbf 1 : \omega\in \Omega(B\times B')\}$ is invariant as in Theorem \ref{thoc}(i), so the numbers $a_{(0,0),(c,c')}$ should all be $1$. Therefore 
$$\alpha_0=\frac{1}{N'}\sum_{c'\in B' }a_{(0,0),(c,c')}=1.$$
\end{remark}

\begin{remark}\label{remc3}
This construction cannot be used for the Middle Third Cantor set. In that example, $R=3$, $B=\{0,2\}$. From Proposition \ref{pr1.10}, we see that we must have $1+e^{2\pi i \cdot \frac{2l}3}=0$ for some integer $l$, and that is impossible. 
\end{remark}

\begin{proposition}\label{preq}
With the notation of Theorem \ref{th1}, the following statements are equivalent. 
\begin{enumerate}
	\item The matrix $$	T=\frac{1}{\sqrt{N}} \left(  e^{2 \pi i {({R}^{T}})^{-1} l \cdot b} \alpha_{l} \right)_{l\in L, b\in B}$$

has orthonormal columns, i.e., it is an isometry.
\item For every $t\in \br^d$
\begin{equation}
\sum_{l\in L}|\alpha_l|^2\left|m_B((R^T)^{-1}(t-l))\right|^2=1.
\label{eq1.10.2}
\end{equation}
\item The functions $\{\alpha_le_l : l\in L\}$  form a Parseval frame for the space $L^2(\delta_{R^{-1}B})$, where $\delta_{R^{-1}B}$ is the measure $\delta_{R^{-1}B}=\frac{1}{N}\sum_{b\in B}\delta_{R^{-1}b}$ ($\delta_a$ denotes the Dirac measure at $a$). 
\end{enumerate}
\end{proposition}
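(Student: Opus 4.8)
The plan is to route both equivalences through condition (i), by translating the analytic statements (ii) and (iii) into linear algebra for the matrix $T$ of \eqref{eqt1.1}. The common computation is the following. For $t\in\br^d$ let $\eta(t)\in\bc^B$ be the vector with entries $\eta(t)_b=\frac{1}{\sqrt N}e^{2\pi i (R^T)^{-1}t\cdot b}$, so that $\norm{\eta(t)}=1$. Expanding $m_B$ as in \eqref{eqt1.4}, a direct computation shows that the $l$-th coordinate of $\cj{T}\eta(t)$ equals $\cj{\alpha_l}\,m_B((R^T)^{-1}(t-l))$, whence
$$\sum_{l\in L}|\alpha_l|^2\left|m_B((R^T)^{-1}(t-l))\right|^2=\norm{\cj{T}\eta(t)}^2,\qquad \norm{\eta(t)}^2=1 .$$
Since $T^{*}T=I_N$ if and only if $\cj{T}^{\,*}\cj{T}=I_N$ (conjugate the identity), this settles one direction of (i)~$\Leftrightarrow$~(ii) at once: if $T$ is an isometry then so is $\cj{T}$, and $\norm{\cj{T}\eta(t)}^2=\norm{\eta(t)}^2=1$, which is (ii).

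For the reverse implication (ii)~$\Rightarrow$~(i) I would write $\norm{\cj{T}\eta(t)}^2=\ip{\cj{T}^{\,*}\cj{T}\,\eta(t)}{\eta(t)}$ and expand it as a trigonometric polynomial in $t$. The structural point to exploit is that the entries
$$\left(\cj{T}^{\,*}\cj{T}\right)_{b,b'}=\frac1N\sum_{l\in L}|\alpha_l|^2 e^{2\pi i (R^T)^{-1}l\cdot(b-b')}$$
depend only on the difference $b-b'$. Grouping the resulting exponentials $e^{2\pi i(R^T)^{-1}t\cdot k}$ according to $k=b-b'\in B-B$, and using that distinct $k$ give distinct, hence linearly independent, frequencies, condition (ii) forces each coefficient to match the constant $1$. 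The resulting relations are exactly $\frac1N\sum_{l\in L}|\alpha_l|^2 e^{2\pi i(R^T)^{-1}l\cdot k}=\delta_{k,0}$ for all $k\in B-B$, i.e. $T^{*}T=I_N$. This is precisely the reverse of the coefficient-matching already carried out in Remark~\ref{rem1.11} starting from \eqref{eq1.22}.

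For (i)~$\Leftrightarrow$~(iii) I would identify $L^2(\delta_{R^{-1}B})$ with $\bc^B$. Because $R$ is invertible, the $N$ points $R^{-1}b$, $b\in B$, are distinct, so the evaluation map $Jf=\frac{1}{\sqrt N}(f(R^{-1}b))_{b\in B}$ is a unitary from $L^2(\delta_{R^{-1}B})$ onto $\bc^B$. Evaluating $\alpha_l e_l$ at $R^{-1}b$ gives $\alpha_l e^{2\pi i (R^T)^{-1}l\cdot b}$, so $J(\alpha_l e_l)$ is exactly the $l$-th row of $T$; equivalently, the analysis operator $f\mapsto(\ip{f}{\alpha_l e_l})_{l\in L}$ of the family $\{\alpha_l e_l\}$ corresponds under $J$ to $\cj{T}$, namely $\Theta=\cj{T}\circ J$. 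A family is a Parseval frame if and only if its analysis operator is an isometry, and since $J$ is unitary this holds if and only if $\cj{T}$ is an isometry, i.e. $T^{*}T=I_N$, which is (i).

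The routine parts are the two explicit computations (the $m_B$ expansion and the evaluation of $\alpha_l e_l$ at $R^{-1}b$) and the standard characterization of Parseval frames through the analysis operator. The one genuinely delicate step is the converse (ii)~$\Rightarrow$~(i): norm preservation of $\cj{T}$ on the mere one-parameter family $\{\eta(t)\}$ does \emph{not} in general force $\cj{T}$ to be an isometry, and the argument closes only because the Gram matrix $\cj{T}^{\,*}\cj{T}$ is constant along each diagonal $b-b'=\mathrm{const}$. This Toeplitz structure means that the vanishing of the ``diagonal-sum'' Fourier coefficients actually forces every individual entry to take its required value. Keeping track of this structure, together with the linear independence of the exponentials indexed by $B-B$, is where the care is needed.
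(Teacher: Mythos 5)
Your proof is correct and follows essentially the same route as the paper's: both equivalences are reduced to the orthonormality relations for the columns of $T$, with (i)$\Leftrightarrow$(ii) obtained by expanding $\sum_{l\in L}|\alpha_l|^2\left|m_B((R^T)^{-1}(t-l))\right|^2$ as a trigonometric polynomial indexed by $k\in B-B$ and matching coefficients (your Toeplitz/Gram-matrix observation is exactly the paper's grouping by $k=c-c'$, and the multiplicity factor $\#\{(c,c'):c-c'=k\}\geq 1$ is what lets each coefficient be forced to zero), and (i)$\Leftrightarrow$(iii) obtained by identifying the frame vectors $\alpha_l e_l$ with the rows of $T$ via evaluation at the atoms $R^{-1}b$. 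The only difference is packaging: you phrase (iii) through the analysis operator $\cj{T}\circ J$ with a unitary identification $J$, where the paper uses polarization of the Parseval identity tested on the canonical vectors $\delta_b,\delta_{b'}$ --- the same argument in operator-theoretic dress.
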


\begin{proof}
The condition in (ii) can be rewritten as
$$1=\frac{1}{N^2}\sum_{l\in L}|\alpha_l|^2\sum_{c,c'\in B}e^{2\pi i (R^T)^{-1}(t-l)\cdot(c-c')}=\frac{1}{N^2}\sum_{k\in B-B}e^{2\pi i (R^T)^{-1}t\cdot k}\sum_{\substack{c,c'\in B\\ c-c'=k}}\sum_{l\in L}|\alpha_l|^2e^{2\pi i (R^T)^{-1}l\cdot k}$$
$$=\frac{1}{N^2}\sum_{k\in B-B}e^{2\pi i (R^T)^{-1}t\cdot k}\#\{(c,c')\in B\times B: c-c'=k\}\sum_{l\in L}|\alpha_l|^2e^{2\pi i (R^T)^{-1}l\cdot k}.$$
Equating the coefficients, this is equivalent to 
$$\frac{1}{N}\sum_{l\in L}|\alpha_l|^2=1,$$
and
$$\frac{1}{N}\sum_{l\in L}|\alpha_l|^2e^{2\pi i (R^T)^{-1}l\cdot (c-c')}=0,$$
for all $c\neq c'$ in $B$. Thus (i) and (ii) are equivalent. 

The condition in (iii) can be rewritten as: for all $u,v\in L^2(\delta_B)$, with $t_l=\alpha_le_l$ in $L^2(\delta_{R^{-1}B})$,
$$\sum_{c\in B}u_c\cj v_c=\ip{u}{v}=\sum_{l\in L}\ip{u}{t_l}\ip{t_l}{v}=\sum_{l\in L}\sum_{c,c'\in B}u_c\cj v_{c'}|\alpha_l|^2e^{-2\pi il\cdot R^{-1}(c-c')}$$
$$=\sum_{c,c'\in B}u_c\cj v_{c'}\sum_{l\in L}|\alpha_l|^2e^{-2\pi i (R^T)^{-1}l\cdot (c-c')}.$$
Using the canonical vectors $u:=\delta_{b}$, $v=\delta_{b'}$ we obtain that the statements in (i) and (iii) are equivalent. 
\end{proof}

\begin{definition}\label{defru}
We define the {\it Ruelle (transfer) operator} for functions $f$ defined on $\br^d$ (or just on $X_B$) by
$$R f(t)=\sum_{l\in L}|\alpha_l|^2|m_B(g_l(t))|^2f(g_l(t)),$$
where
$$g_l(t)=(R^T)^{-1}(t-l),\quad (l\in L,t\in \br^d).$$

Note that the condition \eqref{eqt1.3} can be rewritten as $Rh=h$ and condition (ii) in Proposition \ref{preq} means that $R\mathbf 1=\mathbf 1$. 
\end{definition}

\begin{proposition}\label{prit}
If one of the equivalent conditions in Proposition \ref{preq} is satisfied then, for any $k\in\bn$, the functions 
$$\left\{\alpha_{l_0}\dots\alpha_{l_{k-1}}e_{l_0+R^Tl_1+\dots+(R^T)^{k-1}l_{k-1}} : l_i\in L\right\}$$
form a Parseval frame for the space $L^2(\delta_{R^{-k}(B+RB+\dots+R^{k-1}B)})$. 
\end{proposition}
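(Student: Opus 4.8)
The plan is to verify the Parseval identity directly, i.e. to show that $\sum_{l_0,\dots,l_{k-1}\in L}|\ip{g}{\phi_{l_0\dots l_{k-1}}}|^2=\norm{g}^2$ for every $g\in L^2(\mu_k)$, where I abbreviate $\mu_k:=\delta_{R^{-k}(B+RB+\dots+R^{k-1}B)}$, $\phi_{l_0\dots l_{k-1}}:=\alpha_{l_0}\dots\alpha_{l_{k-1}}e_{l_0+R^Tl_1+\dots+(R^T)^{k-1}l_{k-1}}$, and $\pi_k(\vec b):=R^{-k}(b_0+Rb_1+\dots+R^{k-1}b_{k-1})$ for $\vec b=(b_0,\dots,b_{k-1})\in B^k$, so that $\mu_k=\frac1{N^k}\sum_{\vec b\in B^k}\delta_{\pi_k(\vec b)}$. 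Everything will reduce to the single $\bz^d$-periodic trigonometric polynomial $P(w):=\frac1N\sum_{l\in L}|\alpha_l|^2 e^{2\pi i l\cdot w}$ (periodic because $L\subset\bz^d$).

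First I record what the hypothesis gives about $P$. The computation in the proof of Proposition~\ref{preq} is exactly the statement that $P(0)=1$ and $P(R^{-1}(c-c'))=0$ for all $c\neq c'$ in $B$. Combined with periodicity this already forces the digits of $B$ to be incongruent modulo $R\bz^d$: if $c-c'\in R\bz^d\setminus\{0\}$ then $R^{-1}(c-c')\in\bz^d$, so $P(R^{-1}(c-c'))=P(0)=1\neq 0$, a contradiction. In particular $\pi_k$ is injective on $B^k$ (if $\sum_j R^jw_j=0$ with $w_j\in B-B$ not all zero, the lowest nonzero $w_{j_0}$ would lie in $R\bz^d$), so $\mu_k$ is the uniform measure on $N^k$ distinct points.

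The heart of the argument is a coordinatewise factorization. Writing $g_{\vec b}:=g(\pi_k(\vec b))$, so that $\norm g^2=\frac1{N^k}\sum_{\vec b}|g_{\vec b}|^2$, I expand
\[
\sum_{\vec l\in L^k}|\ip{g}{\phi_{\vec l}}|^2=\frac1{N^{2k}}\sum_{\vec b,\vec c\in B^k}g_{\vec b}\cj{g_{\vec c}}\sum_{\vec l\in L^k}|\alpha_{l_0}\dots\alpha_{l_{k-1}}|^2 e^{2\pi i\lambda(\vec l)\cdot(\pi_k(\vec c)-\pi_k(\vec b))},
\]
where $\lambda(\vec l)=\sum_{i=0}^{k-1}(R^T)^il_i$. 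Using $(R^T)^i=(R^i)^T$ and setting $v=\pi_k(\vec c)-\pi_k(\vec b)$, the exponent becomes $\sum_{i}l_i\cdot R^iv$, so the inner sum over $\vec l$ factors across the coordinates $i=0,\dots,k-1$ and equals $N^k\prod_{i=0}^{k-1}P(R^iv)$. Hence the whole expression collapses to $\frac1{N^k}\sum_{\vec b,\vec c}g_{\vec b}\cj{g_{\vec c}}\prod_{i=0}^{k-1}P(R^iv)$, and it remains to prove $\prod_{i=0}^{k-1}P(R^iv)=\delta_{\vec b,\vec c}$. The diagonal is immediate ($v=0$ gives $P(0)^k=1$). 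For $\vec b\neq\vec c$, put $w_j=c_j-b_j$, let $j_0$ be the smallest index with $w_{j_0}\neq 0$, and choose $i=k-1-j_0\in\{0,\dots,k-1\}$; since $v=\sum_jR^{j-k}w_j$, every term with $j>j_0$ then carries a nonnegative power of $R$, so $R^iv=R^{-1}w_{j_0}+(\text{integer vector})$. By periodicity $P(R^iv)=P(R^{-1}w_{j_0})$, which vanishes because $w_{j_0}\in(B-B)\setminus\{0\}$. Thus one factor is $0$, the off-diagonal terms drop out, and we are left with $\frac1{N^k}\sum_{\vec b}|g_{\vec b}|^2=\norm g^2$.

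The main obstacle is the off-diagonal cancellation: the "lower-triangular" structure in which, after reduction modulo $\bz^d$, only the lowest nonzero digit $w_{j_0}$ survives and kills exactly one factor of the product. This is where the hypothesis does its work twice over, since incongruence of $B$ modulo $R\bz^d$ (a consequence of condition (i) via the periodicity of $P$) both guarantees $P(R^{-1}w_{j_0})=0$ and secures the injectivity of $\pi_k$ needed for the identification $\norm g^2=\frac1{N^k}\sum_{\vec b}|g_{\vec b}|^2$. Once this triangular vanishing is in place, the $k=1$ data from Proposition~\ref{preq} propagates to all $k$ through the coordinatewise factorization.
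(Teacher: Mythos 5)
Your proof is correct, but it follows a genuinely different route from the paper's. The paper's proof stays at the level of the functional identity: it iterates the Ruelle operator of Definition \ref{defru} (condition (ii) of Proposition \ref{preq} says $R\mathbf 1=\mathbf 1$, hence $R^k\mathbf 1=\mathbf 1$), rewrites the resulting product $|m_B(g_{l_{k-1}}t)|^2\cdots|m_B(g_{l_0}\cdots g_{l_{k-1}}t)|^2$ using the digit-side factorization $m_{B+RB+\dots+R^{k-1}B}(x)=m_B(x)m_B(R^Tx)\cdots m_B((R^T)^{k-1}x)$, and then invokes the equivalence (ii)$\Leftrightarrow$(iii) of Proposition \ref{preq} applied to the composite system $(R^k,\,B+RB+\dots+R^{k-1}B)$ with frequencies $l_0+R^Tl_1+\dots+(R^T)^{k-1}l_{k-1}$. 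You instead verify the Parseval identity directly in the $N^k$-dimensional space, and your factorization happens on the frequency side: $\sum_{\vec l}|\alpha_{l_0}\cdots\alpha_{l_{k-1}}|^2e^{2\pi i\lambda(\vec l)\cdot v}=N^k\prod_{i=0}^{k-1}P(R^iv)$ with $P(w)=\frac1N\sum_{l\in L}|\alpha_l|^2e^{2\pi i l\cdot w}$, after which the off-diagonal vanishing follows from the lowest-nonzero-digit (triangular) argument and the coefficient identities $P(0)=1$, $P(R^{-1}(c-c'))=0$ extracted from Proposition \ref{preq}. What the paper's route buys is brevity and conceptual continuity --- the same transfer-operator mechanism reappears in Theorem \ref{th2}, and the composite-system viewpoint makes the proposition literally an instance of the $k=1$ case. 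What your route buys is self-containment and some extra information the paper leaves implicit: you show that the hypothesis forces the elements of $B$ to be incongruent modulo $R\bz^d$ and hence that the map $(b_0,\dots,b_{k-1})\mapsto R^{-k}(b_0+Rb_1+\dots+R^{k-1}b_{k-1})$ is injective, so that $\delta_{R^{-k}(B+RB+\dots+R^{k-1}B)}$ really is the uniform measure on $N^k$ distinct points --- a fact the paper's normalization of $m_{B+RB+\dots+R^{k-1}B}$ by $N^k$ tacitly presupposes. Note also that your separate injectivity step is not logically independent of your main computation: if two distinct words gave $v=0$, your triangular argument would yield $1=P(0)=0$, so the same argument delivers both facts at once.
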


\begin{proof}
Since $R^k\mathbf 1=\mathbf 1$, we obtain by induction 
$$1=\sum_{l_0,\dots,l_{k-1}\in L}|\alpha_{l_0}|^2\dots|\alpha_{l_{k-1}}|^2|m_B(g_{l_{k-1}}t)|^2|m_B(g_{l_{k-2}}g_{l_{k-1}}t)|^2\dots|m_B(g_{l_{0}}\dots g_{l_{k-1}}t)|^2$$
$$=\sum_{l_0,\dotsm l_{k-1}\in L}|\alpha_{l_0}|^2\dots|\alpha_{l_{k-1}}|^2\left|m_{B+RB+\dots +R^{k-1}B}\left((R^T)^{-k}(t- (l_0+R^Tl_1+\dots +(R^T)^{k-1}l_{k-1})\right)\right|^2,$$
where 
$$m_{B+RB+\dots +R^{k-1}B}:=\frac{1}{N^k}\sum_{b_0,\dots,b_{k-1}\in B}e^{2\pi i x\cdot (b_0+Rb_1+\dots+R^{k-1}b_{k-1})}=m_B(x)m_B((R^T)x)\dots m_B((R^T)^{k-1}x).$$
The proposition follows now from the equivalence between (ii) and (iii) in Proposition \ref{preq}.
\end{proof}

\begin{definition}\label{defcong}
For $k\in\bz^d$, we denote 
$$[k]:=\{k'\in\bz : (k'-k)\cdot R^{-1}b\in\bz, \mbox{ for all } b\in B\}.$$
We denote by $[L]:=\{[l] : l\in L\}$. 
\end{definition}

\begin{proposition}\label{pr1.13}
Assume $\alpha_l\neq 0$ for all $l\in L$. 
If one of the equivalent conditions in Proposition \ref{preq} is satisfied then:
\begin{enumerate}
	\item $N\leq \#[L]$.
	\item For all $l_0\in L$,
\begin{equation}
\sum_{{l\in L, l\in [l_0]}}|\alpha_l|^2\leq1.
\label{eq[l]}
\end{equation}
\end{enumerate} 
\end{proposition}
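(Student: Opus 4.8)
The plan is to read the hypothesis through the third characterization in Proposition~\ref{preq}: assuming (i) (equivalently (ii)) holds, the family $\{\alpha_l e_l : l\in L\}$ is a Parseval frame for $L^2(\delta_{R^{-1}B})$. Because $R$ is invertible and the elements of $B$ are distinct, the $N$ points $R^{-1}b$, $b\in B$, are distinct, so this Hilbert space has dimension exactly $N$. The key structural observation I would isolate first is that, as a vector of $L^2(\delta_{R^{-1}B})$, the exponential $e_l$ takes the value $e^{2\pi i (R^T)^{-1}l\cdot b}$ at the point $R^{-1}b$ and therefore depends only on the class $[l]$ of Definition~\ref{defcong}: if $[l]=[l']$ then $e_l=e_{l'}$ in $L^2(\delta_{R^{-1}B})$. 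Moreover each $e_l$ is a unit vector, since $\|e_l\|^2=\frac1N\sum_{b\in B}1=1$. Thus the frame vectors sharing a common class are scalar multiples of a single unit vector, and the whole frame occupies only $\#[L]$ distinct one-dimensional directions.

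For part (i) I would use a bare dimension count. A Parseval frame spans its space, so $\{e_l : l\in L\}$ spans $L^2(\delta_{R^{-1}B})$; but these vectors lie in the linear span of the $\#[L]$ distinct vectors $\{e_l\}_{[l]\in[L]}$, whose span has dimension at most $\#[L]$. Since this span is the whole $N$-dimensional space, I get $N\le\#[L]$.

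For part (ii) I would test the Parseval identity on the unit vector $v=e_{l_0}$. Writing $1=\|e_{l_0}\|^2=\sum_{l\in L}|\alpha_l|^2|\ip{e_{l_0}}{e_l}|^2$ and computing $\ip{e_{l_0}}{e_l}=\frac1N\sum_{b\in B}e^{2\pi i (R^T)^{-1}(l_0-l)\cdot b}$, this inner product equals $1$ exactly when $l\in[l_0]$, and has modulus at most $1$ otherwise. Retaining only the terms with $l\in[l_0]$ and discarding the remaining nonnegative ones gives $\sum_{l\in L,\,l\in[l_0]}|\alpha_l|^2\le 1$, which is \eqref{eq[l]}.

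I do not expect a genuine obstacle here: once the frame reformulation of Proposition~\ref{preq} is invoked, both inequalities reduce to elementary linear algebra together with a single application of the frame equality. The only points requiring care are verifying that $e_l$ honestly factors through the equivalence $[\cdot]$ and that $\delta_{R^{-1}B}$ is supported on exactly $N$ distinct points, which together legitimize the dimension count in (i). The hypothesis $\alpha_l\neq0$ is in fact not needed for either inequality (vanishing weights only delete terms), but it ensures that every class in $[L]$ is represented by a genuinely nonzero frame vector.
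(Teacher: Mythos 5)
Your proposal is correct, and it splits into one part that matches the paper and one that takes a genuinely different route. For part (ii) you and the paper argue the same way: expand the Parseval identity for the frame $\{\alpha_l e_l\}$, note that $e_l=e_{l_0}$ in $L^2(\delta_{R^{-1}B})$ whenever $l\in[l_0]$, and discard the remaining nonnegative terms. The only difference is the test vector: the paper applies the identity to the frame vector $\alpha_{l_0}e_{l_0}$ and then divides by $|\alpha_{l_0}|^2$, which is precisely where the hypothesis $\alpha_{l_0}\neq 0$ enters, whereas you test on the unit vector $e_{l_0}$ itself and need no division --- this is why your closing remark that the nonvanishing of the $\alpha_l$ is not really needed for the inequalities is accurate (for the paper's own computation it is needed). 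For part (i) the two arguments are different. The paper starts from the identity $\sum_{l\in L}|\alpha_l|^2=N$ (the $k=0$ Fourier-coefficient identity extracted in the proof of Proposition \ref{preq}, equivalently the trace formula for a Parseval frame of an $N$-dimensional space), partitions $L$ into the classes of $[L]$, and bounds each class sum by $1$ using part (ii); so in the paper, (i) is a corollary of (ii). You instead make a bare dimension count: the Parseval frame spans $L^2(\delta_{R^{-1}B})$, which has dimension exactly $N$ because the points $R^{-1}b$, $b\in B$, are distinct, while the frame vectors occupy at most $\#[L]$ distinct directions because $e_l$, as an element of $L^2(\delta_{R^{-1}B})$, depends only on the class $[l]$. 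Your route is more elementary and self-contained (it uses neither part (ii) nor the normalization $\sum_{l}|\alpha_l|^2=N$); the paper's route carries more quantitative information, since it shows how the total frame mass $N$ is distributed among the classes and makes visible that equality $N=\#[L]$ forces each class sum in \eqref{eq[l]} to equal $1$, the kind of rigidity exploited later in Proposition \ref{pr3}.
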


\begin{proof}
Since $\{\alpha_l e_l : l\in L\}$ form a Parseval frame for $L^2(\delta_{R^{-1}B})$, we have 
$$|\alpha_{l_0}|^2=\|\alpha_{l_0}e_{l_0}\|^2=\sum_{l\in L}|\ip{\alpha_{l_0}e_{l_0}}{\alpha_le_l}|^2.$$
Then, since for $l\in [l_0]$ we have $e_l=e_{l_0}$ in $L^2(\delta_{R^{-1}B})$, we get that
$$\sum_{l\in L, l\in[l_0]}|\alpha_{l_0}|^2|\alpha_l|^2\leq |\alpha_{l_0}|^2.$$
This implies that 
$$\sum_{{l\in L, l\in [l_0]}}|\alpha_l|^2\leq 1.$$

Also, 
$$N=\sum_{l\in L}|\alpha_l|^2=\sum_{[l_0]\in [L]}\sum_{l\in L\cap[l_0]}|\alpha_l|^2\leq \sum_{[l_0]\in [L]}1=\#[L].$$
\end{proof}

\section{Invariant sets}

\begin{definition}\label{def1}
With the notations of Theorem \ref{th1}, a set $M\subset \br^d$ is called {\it invariant} if for any point $t\in M$, and any $l\in L$, if $\alpha_{l}m_B((R^T)^{-1}(t-l))\neq 0$, then $g_{l}(t):=(R^T)^{-1}(t-l)\in M$. $M$ is said to be non-trivial if $M\neq \{0\}$.

Note that 
$$\sum\limits_{l\in L} | \alpha_{l}  |^2 \ |m_{B}(g_{l} (t))|^{2}=1 \quad (t \in \mathbb{R}^d ),$$
see \eqref{eq1.10.2}, and therefore, we can interpret the number $| \alpha_{l}  |^2 \ |m_{B}(g_{l} (t))|^{2}$ as the probability of transition from $t$ to $g_{l}(t)$, and if this number is not zero then we say that this {\it transition is possible in one step (with digit $l$)}, and we write $t\rightarrow g_{l}(t)$ or $t\stackrel{l}{\rightarrow}g_l(t)$. We say that the {\it transition is possible} from a point $t$ to a point $t'$ if there exist $t_0=t$, $t_1,\dots, t_n=t'$ such that $t=t_0\rightarrow t_1\rightarrow\dots\rightarrow t_n=t'$. The {\it trajectory} of a point $t$ is the set of all points $t'$ (including the point $t$) such that the transition is possible from $t$ to $t'$.

A {\it cycle} is a finite set $\{t_0,\dots,t_{p-1}\}$ such that there exist $l_0,\dots, l_{p-1}$ in $L$ such that $g_{l_0}(t_0)=t_1,\dots, g_{l_{p-1}}(t_{p-1})=t_{p}:=t_0$. Points in a cycle are called {\it cycle points}. 

A cycle $\{t_0,\dots, t_{p-1}\}$ is called {\it extreme} if $|m_B(t_i)|=1$ for all $i$; by the triangle inequality, since $0\in B$, this is equivalent to $t_i\cdot b\in \bz$ for all $b\in B$. 

\end{definition}

The next proposition gives some information about the structure of finite, minimal sets, which makes it easier to find such sets in concrete examples. 
\begin{proposition}\label{pr3}
Assume $\alpha_l\neq 0$ for all $l\in L$. 
Let $M$ be a non-trivial finite, minimal invariant set. Then, for every two points $t,t'\in M$ the transition is possible from $t$ to $t'$ in several steps. In particular, every point in the set $M$ is a cycle point. In dimension $d=1$, $M$ is contained in the interval $\left[\frac{\min(-L)}{R-1},\frac{\max(-L)}{R-1}\right]$. 

If $t$ is in $M$ and if there are two possible transitions $t\rightarrow g_{l_1}(t)$ and $t\rightarrow g_{l_2}(t)$, then $l_1\equiv l_2(\mod R^T\bz^d)$.

Every point $t$ in $M$ is an extreme cycle point, i.e., $|m_B(t)|=1$ and if $t\rightarrow g_{l_0}(t)$ is a possible transition in one step, then $[l_0]\cap L=\{l\in L : l\equiv l_0(\mod (R^T)\bz^d)\}$ (with the notation in Definition \ref{defcong}) and
\begin{equation}
\sum_{l\in L,l\equiv l_0(\mod R^T\bz^d)}|\alpha_l|^2=1.
\label{eqac}
\end{equation}

In particular $t\cdot b\in\bz$ for all $b\in B$. 
\end{proposition}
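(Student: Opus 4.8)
The plan is to establish the assertions in the order: transitivity (whence every point is a cycle point), the one–dimensional interval bound, the congruence modulo $R^T\bz^d$ (the crux), and finally extremality together with the counting identity \eqref{eqac}. First I would prove transitivity. For $t\in M$ let $\orb(t)$ denote its trajectory; directly from the definition of a possible transition, $\orb(t)$ is a nonempty invariant subset of $M$, so by minimality $\orb(t)=M$. (This is consistent with non-triviality: $\{0\}$ is itself invariant, since by Proposition \ref{pr1.10} one has $m_B((R^T)^{-1}l)=0$ for every $l\neq 0$ in $L$, so no transition leaves $0$; thus a minimal non-trivial $M$ cannot contain $0$.) Hence from any $t$ one reaches any $t'$, and in particular $t$ returns to itself, so every point of $M$ lies on a cycle. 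Writing a cycle as $t_0\to\cdots\to t_{p-1}\to t_0$ with $t_{i+1}=(R^T)^{-1}(t_i-l_i)$ and iterating $t_i=R^Tt_{i+1}+l_i$ around the loop yields $t_0=((R^T)^p-I)^{-1}\sum_{j}(R^T)^jl_j$; since $R$ is expansive, $(R^T)^p-I$ is invertible, so every point of $M$ is a rational vector --- a fact used below. For $d=1$, taking $t^*=\max M$ and a predecessor $t'\to t^*$ in $M$ gives $t^*=\frac{t'-l}{R}\le\frac{t^*+\max(-L)}{R}$, hence $t^*\le\frac{\max(-L)}{R-1}$, and symmetrically $\min M\ge\frac{\min(-L)}{R-1}$.

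The heart of the matter is the congruence, which I would deduce by a finite-group argument on the torus. Since $R^T\bz^d\subseteq\bz^d$, the matrix $R^T$ induces an endomorphism $\phi$ of $\bt^d=\br^d/\bz^d$; write $\bar t$ for the class of $t$. A possible transition $t\to s=g_l(t)$ means $R^Ts=t-l$ with $l\in\bz^d$, so $\phi(\bar s)=\bar t$; as every point of $M$ is a cycle point it has a predecessor in $M$, whence $\phi(\bar M)\subseteq\bar M$ for $\bar M=\{\bar t:t\in M\}$. Since the points of $M$ are rational, $\bar M$ generates a \emph{finite} subgroup $G\le\bt^d$ with $\phi(G)\subseteq G$. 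On a finite group the endomorphism $\phi$ restricts to an automorphism of its eventual image $G_\infty=\bigcap_n\phi^n(G)$, and every $\phi$-periodic point lies in $G_\infty$. Now suppose $t\to g_{l_1}(t)$ and $t\to g_{l_2}(t)$ are both possible and put $s_i=g_{l_i}(t)$; by transitivity each $s_i$ is a cycle point, so $\bar s_i$ is $\phi$-periodic, hence lies in $G_\infty$, while $\phi(\bar s_1)=\bar t=\phi(\bar s_2)$. Injectivity of $\phi$ on $G_\infty$ forces $\bar s_1=\bar s_2$, i.e. $(R^T)^{-1}(l_2-l_1)\in\bz^d$, which is exactly $l_1\equiv l_2\ (\mod R^T\bz^d)$. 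Upgrading the evident weak congruence to congruence modulo $R^T\bz^d$ in this way is the main obstacle, and is the only place where finiteness and minimality are essential.

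Finally I would read off extremality and \eqref{eqac}. Because $m_B$ is $\bz^d$-periodic, $l\equiv l'\ (\mod R^T\bz^d)$ implies $m_B(g_l(t))=m_B(g_{l'}(t))$, so grouping \eqref{eq1.10.2} by residue classes modulo $R^T\bz^d$ (with $g_{\bar\ell}(t)$ denoting the common value on a class) gives
$$1=\sum_{\bar\ell}\Big(\sum_{l\in L,\ l\equiv\bar\ell}|\alpha_l|^2\Big)|m_B(g_{\bar\ell}(t))|^2.$$
By the congruence just proved, for $t\in M$ only the single class $\bar\ell_0$ of the possible transitions contributes (every other class has $m_B(g_{\bar\ell}(t))=0$), so $1=\big(\sum_{l\equiv l_0}|\alpha_l|^2\big)|m_B(g_{l_0}(t))|^2$. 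Each factor is at most $1$: the first because $\{l\in L:l\equiv l_0\ (\mod R^T\bz^d)\}\subseteq[l_0]\cap L$ (notation of Definition \ref{defcong}) and $\sum_{l\in[l_0]\cap L}|\alpha_l|^2\le1$ by Proposition \ref{pr1.13}. Hence both factors equal $1$, which is \eqref{eqac} together with $|m_B(g_{l_0}(t))|=1$; since by transitivity every point of $M$ is such a target, $|m_B(t)|=1$, i.e. $t\cdot b\in\bz$ for all $b\in B$, at every $t\in M$. Lastly, $\sum_{l\equiv l_0}|\alpha_l|^2=1$ together with $\sum_{l\in[l_0]\cap L}|\alpha_l|^2\le1$ and $\alpha_l\neq0$ leaves no room for extra terms, forcing $[l_0]\cap L=\{l\in L:l\equiv l_0\ (\mod R^T\bz^d)\}$ and completing the proof.
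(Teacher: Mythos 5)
Your proof is correct, and most of it runs along the same lines as the paper's: transitivity via minimality of the trajectory, every point of $M$ being a cycle point, and the final squeeze --- grouping \eqref{eq1.10.2} into residue classes modulo $R^T\bz^d$ using the $\bz^d$-periodicity of $m_B$, bounding $\sum_{l\in[l_0]\cap L}|\alpha_l|^2\le 1$ by Proposition \ref{pr1.13}, and forcing equality everywhere --- to obtain \eqref{eqac}, $|m_B(t)|=1$, and $[l_0]\cap L=\{l\in L: l\equiv l_0\ (\mod R^T\bz^d)\}$. The genuine divergence is at the crux, the congruence between the digits of two possible transitions from the same point. The paper closes both transitions into cycles through $t$, equalizes the two cycle lengths by taking a common multiple, writes the two cycle equations $x_0=(R^T)^px_0+(R^T)^{p-1}l_{p-1}+\dots+(R^T)l_1+l_0$, and subtracts; the congruence drops out in one line of algebra. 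You instead pass to the torus: cycle points are rational (via $(I-(R^T)^p)^{-1}$ applied to an integer vector), so the classes of $M$ generate a finite subgroup $G\le\bt^d$ invariant under the endomorphism $\phi$ induced by $R^T$; periodic points lie in the eventual image $G_\infty=\bigcap_n\phi^n(G)$, on which $\phi$ is an automorphism, and injectivity there together with $\phi(\bar s_1)=\bar t=\phi(\bar s_2)$ forces $\bar s_1=\bar s_2$, which is exactly $l_1\equiv l_2\ (\mod R^T\bz^d)$. Each ingredient of this (rationality, finiteness and $\phi$-invariance of $G$, injectivity on the eventual image, $\phi$-periodicity of the $\bar s_i$) is correctly justified, so the argument stands; it is heavier machinery than the paper's two-line subtraction, but what it buys is a structural explanation --- the congruence is precisely injectivity of $x\mapsto R^Tx$ on the finite group generated by the cycle points mod $\bz^d$ --- and it cleanly separates the dynamical input (everything in $M$ is periodic) from the algebraic one. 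Two minor remarks: your $d=1$ interval bound (via $\max M$ and a predecessor) tacitly assumes $R\ge 2$ positive when dividing by $R$ and $R-1$, though the paper's own treatment of that point is equally implicit; and your parenthetical observation that $0\notin M$ is correct but not needed for any of the stated claims.
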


\begin{proof}
Let $t,t'$ be two points in $M$. Suppose the transition from $t$ to $t'$ is not possible. Let $T$ be the trajectory of $t$. Then $T$ is an invariant set contained in $M$ which does not contain $t'$. This contradicts the minimality of $T$.

Take a point $t$ in $M$. Then at least one transition in one step is possible $t\rightarrow g_{l}(t)$, because of \eqref{eq1.10.2}. But then the transition back to $t$ from $g_{l}(t)$ is also possible, so $t$ is a cycle point. Writing the equations for a cycle point (see also the equations for $x_0$ below), we get that, in dimension $d=1$, any cycle point is contained in the interval $\left[\frac{\min(-L)}{R-1},\frac{\max(-L)}{R-1}\right]$. 

If the transitions $t\rightarrow g_{l_0}(t)$ and $t\rightarrow g_{l_0'}(t)$ are possible, then we can close two cycles by some transitions from $g_{l_0}(t)$ and $g_{l_0'}(t)$ back to $t$. We can assume also that these two cycles have the same length, by taking a common multiple of their minimal lengths and going around the cycles several times. If we write the equations for the two cycles, we get that there are some numbers $l_0,\dots, l_{p-1}$, $l_0',\dots,l_{p-1}'$ in $L$ such that 
$$x_0=(R^T)^px_0+(R^T)^{p-1}l_{p-1}+\dots+(R^T)l_1+l_0,$$
$$x_0=(R^T)^px_0+(R^T)^{p-1}l_{p-1}'+\dots+(R^T)l_1'+l_0'.$$
Subtracting, we get that $l_0\equiv l_0'(\mod R^T\bz^d)$.

If $t$ is in $M$ and $t\rightarrow g_{l_0}(t)$ is a possible transition in one step, then for $l\in L$, $l\not\equiv l_0(\mod R^T\bz^d)$ we have $\alpha_lm_B(g_l(t))=0$ (because, from the previous statement, we know that the transition from $t$ with $l$ in not possible in one step). Therefore, with \eqref{eq[l]},
$$1=\sum_{l\in L, l\equiv l_0(\mod R^T\bz^d)}|\alpha_l|^2|m_B(g_l(t))|^2=|m_B(g_{l_0}(t))|^2\sum_{l\equiv l_0}|\alpha_l|^2\leq \sum_{l\in[l_0]}|\alpha_l|^2\leq 1.$$
Therefore we have equalities in all inequalities, so $|m_B(g_l(t))|=1$, $[l_0]\cap L=\{l\in L : l\equiv l_0(\mod R^T\bz^d)\}$ and \eqref{eqac} holds. Since $t$ is on a cycle, it can be written as $t=g_{l_1}(t')$ for a $t'$ in $M$ and therefore $|m_B(t)|=1$. Using the triangle inequality $1\leq |m_B(t)|=1$, so we must have equality, and we obtain that $t\cdot b\in\bz$ for all $b\in B$. 

\end{proof}

\begin{example}
We give an example of a finite minimal invariant set where there are two transitions possible from a point and which is a union of extreme cycles that have a common point. 
Let $R=2$, $B=\{0,1\}$, $L=\{0,1,3\}$, and $(\alpha_{l})_{l\in L}=(1,\frac{1}{\sqrt 2},\frac{1}{\sqrt 2})$. It can be easily checked that the conditions (i) and (ii) in Theorem \ref{th1} are satisfied. Then 
$$m_B(x)=\frac{1+e^{2\pi i x}}2.$$
The set $\{-1,-2\}$ is minimal invariant. The only possible transitions in one step from $-1$ are to $-1$, with digit 1 and to $-2$ with digit 3. The only possible transition in one step from $-2$ is to $-1$ with digit 0. 

\end{example}

\begin{theorem}\label{th2}
In dimension $d=1$, let $|R|\geq 2$ and $B$ be as in Theorem \ref{th1} and assume also that the numbers $\alpha_{l}$, $l\in L$ satisfy the conditions (i) and (ii) in Theorem \ref{th1}. If the only finite minimal invariant set is $\{0\}$, then the condition (iii) is satisfied too and therefore the set
\begin{equation}
\left\{\left( \prod_{i=0}^k\alpha_{l_i}\right) e_{l_0+R l_1+\dots +R^{k}l_k} : l_0\dots l_k\in\Omega(L)\right\}
\label{eq2.1}
\end{equation}
is a Parseval frame for $L^2(\mu(R,B))$. 

Conversely, if there is a finite minimal invariant set $M\neq\{0\}$, then $e_c$ is orthgonal to the set in \eqref{eq2.1} for all $c\in M$ (so the set in \eqref{eq2.1} is incomplete).
\end{theorem}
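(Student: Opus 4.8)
The plan is to treat the two directions separately, using throughout the transfer (Ruelle) operator $R$ of Definition \ref{defru}: condition (ii) gives $R\mathbf 1=\mathbf 1$ (Proposition \ref{preq}), and condition (iii) reads $Rh=h$. I will use that in dimension one the Fourier transform factors as $\widehat\mu(t)=\prod_{j=1}^\infty m_B(R^{-j}t)$, which comes from iterating the invariance identity $\widehat\mu(t)=m_B(R^{-1}t)\widehat\mu(R^{-1}t)$, and that $m_B$ is $\bz$-periodic since $B\subset\bz$. I also use the contractions $g_l(t)=R^{-1}(t-l)$ (contractions because $|R|\geq2$) and let $A$ be the compact attractor of the iterated function system $\{g_l\}_{l\in L}$; note $0\in A$ (as $0\in L$) and $g_l(A)\subseteq A$.

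For the direct statement I must verify (iii): given entire $h$ with $0\le h\le1$ on $\R$, $h(0)=1$ and $Rh=h$, I show $h\equiv\mathbf1$. Since $R\mathbf1=\mathbf1$, the identity $Rh=h$ displays $h(t)$ as a convex combination of the values $h(g_l(t))$ with weights $|\alpha_l|^2|m_B(g_l(t))|^2$. Set $\beta:=\min_A h$ (attained, $A$ compact) and $E_\beta:=\{t\in A:h(t)=\beta\}$. As $g_l(A)\subseteq A$, each $h(g_l(t))\geq\beta$ for $t\in A$, so the convex-combination identity forces $h(g_l(t))=\beta$ whenever $\alpha_lm_B(g_l(t))\neq0$; thus $E_\beta$ is a non-empty invariant set in the sense of Definition \ref{def1}. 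The decisive point is that $E_\beta$ is \emph{finite}: were it infinite it would have an accumulation point (Bolzano--Weierstrass), and then the entire function $h-\beta$, vanishing on a set with an accumulation point, would be $\equiv\beta$ by the identity theorem, contradicting $h(0)=1$ when $\beta<1$. A non-empty finite invariant set contains a minimal one $M_0$ (one of least cardinality), and by hypothesis $M_0=\{0\}$, forcing $\beta=h(0)=1$; hence $h\equiv1$ on $A$. Finally, iterating $h=R^nh$ writes $h(t)$ as a convex combination of values $h(g_{l_0}\cdots g_{l_{n-1}}(t))$ at points whose distance to $A$ is at most $C|R|^{-n}\to0$ uniformly; continuity of $h$ and $h|_A=1$ then give $h(t)\geq1$, so $h\equiv1$. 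Condition (iii) holds and Theorem \ref{th1} yields the Parseval frame.

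For the converse, let $M\neq\{0\}$ be a finite minimal invariant set and fix $c\in M$. By Proposition \ref{pr3} every point of $M$ is an extreme cycle point, so $t\cdot b\in\bz$ for all $b\in B$ (whence $m_B(t)=1$) for $t\in M$, and $0\notin M$. The core lemma is that $\widehat\mu(c')=0$ for every extreme cycle point $c'\neq0$. Writing the cycle as $c'=c_0\xrightarrow{l_0}c_1\xrightarrow{l_1}\cdots$ with $c_i=Rc_{i+1}+l_i$, a telescoping computation gives $R^{-j}c_0=c_j+\sum_{i=1}^{j}R^{-i}l_{j-i}$; dropping the extreme term $c_j$ (integer phases) and using $\bz$-periodicity yields $m_B(R^{-j}c_0)=m_B\big(\sum_{i=1}^{j}R^{-i}l_{j-i}\big)$. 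Choosing $j$ to be the first index with $l_{j-1}\neq0$ (such $j$ exists, since an all-zero cycle forces $c'=0$) collapses the sum to $R^{-1}l_{j-1}$, and $m_B(R^{-1}l_{j-1})=0$ by Proposition \ref{pr1.10} because $l_{j-1}\neq0$ and $\alpha_{l_{j-1}}\neq0$; so one factor of $\widehat\mu(c')$ vanishes.

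To finish, fix $l_0\cdots l_k\in\Omega(L)$ with all $\alpha_{l_i}\neq0$ and $\lambda=l_0+Rl_1+\cdots+R^kl_k$; the vector $w_\lambda=\big(\prod_i\alpha_{l_i}\big)e_\lambda$ of \eqref{eq2.1} satisfies $\ip{e_c}{w_\lambda}=\overline{\big(\prod_i\alpha_{l_i}\big)}\,\widehat\mu(c-\lambda)$, so it suffices that $\widehat\mu(c-\lambda)=0$. With $c^{(0)}=c$ and $c^{(j+1)}=g_{l_j}(c^{(j)})$, the same periodicity bookkeeping gives $\widehat\mu(c-\lambda)=\big(\prod_{j=1}^{k+1}m_B(c^{(j)})\big)\widehat\mu(c^{(k+1)})$. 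If some transition $c^{(j)}\to c^{(j+1)}$ is impossible then, as $c^{(j)}\in M$ by invariance and $\alpha_{l_j}\neq0$, the factor $m_B(c^{(j+1)})=0$; otherwise all $c^{(j)}\in M$, so $c^{(k+1)}$ is an extreme cycle point $\neq0$ and $\widehat\mu(c^{(k+1)})=0$ by the lemma. Either way $\widehat\mu(c-\lambda)=0$, and the empty word gives $\widehat\mu(c)=0$; hence $e_c$ is orthogonal to \eqref{eq2.1}, which is therefore incomplete. I expect the main obstacle to be the finiteness of $E_\beta$ in the first part: the hypothesis only controls \emph{finite} minimal invariant sets, so ruling out an infinite one is essential, and this is exactly where entireness of $h$ (via the identity theorem), rather than mere continuity, is indispensable.
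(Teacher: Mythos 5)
Your proof is correct, and its first half is essentially the paper's own argument: the paper works on the invariant interval $I=\left[\frac{\min(-L)}{R-1},\frac{\max(-L)}{R-1}\right]$ and considers the zero set $Z$ of $\tilde h=h-\min_I h$, which is exactly your $E_\beta$ (with the attractor $A$ in place of $I$); finiteness of $Z$ via the identity theorem, then invariance of $Z$, then the hypothesis on minimal sets forces $0\in Z$, i.e.\ the minimum equals $1$. The only cosmetic difference is the last step: the paper gets $h\equiv 1$ from $h=1$ on the interval $I$ by analyticity, while you re-iterate the transfer operator and use continuity near $A$; both work, and yours has the small advantage of not caring whether the set where $h=1$ has nonempty interior. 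The converse is where you genuinely diverge. The paper argues by contradiction: if $\widehat\mu(c-\lambda)\neq 0$ for $\lambda=l_0+Rl_1+\dots+R^kl_k$, then, padding with $l_n=0$ for $n>k$ and using the product formula plus integer-periodicity of $m_B$, every transition $c\to g_{l_0}(c)\to g_{l_1}g_{l_0}(c)\to\cdots$ is possible, so the whole orbit lies in the finite set $M$; the tail applies only $g_0$, so the orbit converges to $0$, forcing $0\in M$ and hence $M=\{0\}$ by minimality, a contradiction. You instead argue directly: a standalone lemma that $\widehat\mu$ vanishes at every nonzero extreme cycle point (locating, via Proposition \ref{pr1.10}, the explicitly vanishing factor $m_B(R^{-1}l_{j-1})$ in the infinite product), combined with the factorization $\widehat\mu(c-\lambda)=\bigl(\prod_{j=1}^{k+1}m_B(c^{(j)})\bigr)\widehat\mu(c^{(k+1)})$ and a two-case analysis, with Proposition \ref{pr3} supplying the extreme-cycle structure of $M$. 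The paper's route is shorter and self-contained (it needs neither Proposition \ref{pr3} nor Proposition \ref{pr1.10}); yours isolates a reusable fact --- nonzero extreme cycle points are zeros of $\widehat\mu$, the classical completeness obstruction invoked in Example \ref{ex5.2} --- and identifies exactly which factor dies, rather than deriving a contradiction.

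One caveat you should patch: Proposition \ref{pr3} (and your appeal to it, as well as your claim $\alpha_{l_{j-1}}\neq 0$ along cycles) is stated under the hypothesis $\alpha_l\neq 0$ for all $l\in L$, which Theorem \ref{th2} does not assume. The fix is the trivial reduction to $L''=\{l\in L:\alpha_l\neq 0\}$: digits with $\alpha_l=0$ give zero vectors in \eqref{eq2.1} (orthogonality is then automatic), transitions with such digits are never possible, and conditions (i)--(ii), the invariant sets, and the minimality hypothesis are unchanged under this replacement. This is a hypothesis-bookkeeping point, not a flaw in the argument; the paper's own proof makes the same implicit assumption when it asserts that nonvanishing of the $m_B$-factors makes the transitions \emph{possible}.
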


\begin{proof}
Suppose, there is an entire function $h$ such that $0\leq h\leq 1$ on $\br$, $h(0)=1$ and that satisfies \eqref{eqt1.3}. 
Consider the interval $I:=\left[\frac{\min(-L)}{R-1},\frac{\max(-L)}{R-1}\right]$ and note that this interval is invariant for the maps $g_{l}$, i.e., if $x\in I$ then $g_{l}(x)\in I$. Also $0\in I$ since $0\in L$. 

Define the function $\tilde h (x):=h(x)-\min_{t\in I}h(t)$. Using \eqref{eq1.10.2}, we get that $\tilde h $ satisfies \eqref{eqt1.3}, it is entire, $\tilde h \geq 0$ on $I$; in addition $\tilde h $ has a zero in $I$ (the point where $h$ attains its minimum). 

Let $Z$ be the set of zeroes of $\tilde h $ in $I$. If $Z$ is infinite, then, since $\tilde h $ is entire, we get that $\tilde h =0$ so $h$ is indeed constant. Thus $Z$ has to be finite. Since equation \eqref{eqt1.3} is satisfied by $\tilde h $, we have that, if $t\in Z$, then $t\in I$ and $\tilde h (x)=0$, and then, for any $l$, either $\alpha_{l}m_B(g_{l}(t))=0$ or $\tilde h (g_{l}(t))=0$; also $g_{l}(t)\in I$. This implies that $Z$ is a finite invariant set. Then, it must contain a finite minimal invariant set, and, by hypothesis, it must contain $0$, so $\tilde h (0)=0$, which means $\min_I h(x)=1$. But, $0\leq h\leq 1$ so $h$ is constant 1. 

For the converse, suppose $M\neq\{0\}$ is a minimal finite invariant set. Let $c$ in $M$. We prove that $\widehat\mu(c-(l_0+Rl_1+\dots+R^kl_k))=0$, for all $l_0,\dots,l_k\in L$, which implies that $\ip{e_c}{e_{l_0+Rl_1+\dots+R^kl_k} }=0$. 
Suppose not, pick $l_0,\dots,l_k$ such that $\widehat\mu(c-(l_0+Rl_1+\dots+R^kl_k))\neq 0$ and let $l_{k+1}=l_{k+2}=\dots=0$. 

We have that 
$$\widehat\mu(x)=\prod_{n=1}^\infty m_B(R^{-n}x)$$
where the convergence is uniform on compact sets. We obtain that $$m_B\left(\frac{c-(l_0+Rl_1+\dots+R^kl_k)}{R^n}\right)\neq 0$$ for all $n\in\bn$. This implies that 
$$m_B\left(\frac{c-l_0}{R}\right)\neq 0,m_B\left(\frac{c-l_0-Rl_1}{R^2}\right)\neq 0,\dots, m_B\left(\frac{c-l_0-\dots-R^{n-1}l_{n-1}}{R^n}\right)\neq 0,\dots.$$
But this means that the transitions $c\rightarrow g_{l_0}(c)\rightarrow g_{l_1}g_{l_0}(c)\rightarrow\dots g_{l_n}\dots g_{l_0}(c)\rightarrow\dots$ are possible in one step. Since for $n$ large $l_n=0$, this implies that $g_{l_n}\dots g_{l_0}(c)$ converges to $\{0\}$ and therefore $0\in M$. But $M$ is minimal so $\{0\}=M$, a contradiction. 
\end{proof}

\section{Examples}

\begin{example}
Let $N\in\bn$, $N\geq 3$. Let $R=N$ and $B=\{0,1,2,\dots,N-1\}$. Then the invariant measure $\mu$ is the Lebesgue measure on $[0,1]$. In this example, we will not obtain new Parseval frames for $L^2[0,1]$, because the frequencies are in $\bz$, which already give the well know orthonormal Fourier basis. However, we obtain some interesting properties of the representations of integers in base $N$. Define

Let, $L_1,\dots, L_{N-1}$ be some finite, non-empty sets of integers, with $0\in L_i$, $i=1,\dots,N-2$, and $-1\in L_{N-1}$.
$$L:=\{0\}\cup (1+NL_1)\cup(2+NL_2)\cup\dots\cup ((N-2)+NL_{N-2})\cup ((N-1)+NL_{N-1}).$$
Note that the complete set of representatives $\mod N$, $\{0,1,\dots,N-2,-1\}$ is contained in $L$.

Pick some nonzero complex numbers such that 
$$\sum_{l\in L_i}|\alpha_{i+Nl}|^2=1,\quad(i=1,\dots,N-1).$$

We check that the conditions of Theorem \ref{th2} are satisfied. We have for $b,b'$ in $B$:
$$\sum_{l\in L}|\alpha_l|^2e^{2\pi i R^{-1}l\cdot (b-b')}=1+\sum_{i=1}^{N-1}e^{2\pi i N^{-1}i\cdot(b-b')}\sum_{l\in L_i}|\alpha_{i+Nl}|^2=1+\sum_{i=1}^{N-1}e^{2\pi i N^{-1}i\cdot(b-b')}=N\delta_{bb'}.$$

Now consider a minimal finite invariant set $M$. By Proposition \ref{pr3}, every point $x_0$ in $M$ is an extreme cycle point, so $|m_B(x_0)|=1$ which implies that $x_0=:k_0\in\bz$. We prove that $0\in M$, which, by minimality, implies $M=\{0\}$. Consider the subset $J_0:=\{-1,0,1,\dots,N-2\}$ of $L$. This is a complete set of representatives $\mod N$. Therefore there exists $l_0\in J_0$ and $k_1\in \bz$ such that $k_0=l_0+Rk_1$, which means that $k_1=g_{l_0}(k_0)$. Since $m_B(k_1)=1$, the transition $k_0\rightarrow g_{l_0}(k_0)=k_1$ is possible in one step, so $k_1\in M$. By induction, there exist digits $l_0,l_1,\dots$ in $J_0$ such that $k_{n+1}:=g_{l_n}\dots g_{l_0}(k_0)\in\bz\cap M$. Letting $n\rightarrow \infty$, we get that $k_{n+1}\in \left[ \frac{-(N-2)}{N-1}-\epsilon,\frac{1}{N-1}+\epsilon\right]$ for $n$ large enough. Therefore $k_{n+1}=0$ so $0\in M$, and hence $M=\{0\}$. 

Theorem \ref{th2} implies that the set 
$$\{e_{l_0+Rl_1+\dots +R^kl_k} : l_0\dots l_k\in\Omega(L)\}$$
is a Parseval frame for $L^2[0,1]$.

But, the numbers $l_0+Rl_1+\dots+R^kl_k$ are integers and $\{e_n: n\in\bz\}$ is an orthonormal basis for $L^2[0,1]$. So, if we group the elements in the Parseval frame that correspond to a fixed exponential function $e_n$, this implies that for every $n\in\bz$:
\begin{equation}
\sum_{\substack{l_0\dots l_k\in\Omega(L)\\ l_0+Rl_1+\dots +R^kl_k=n}}|\alpha_{l_0}\dots\alpha_{l_k}|^2=1.
\label{eqdec}
\end{equation}

The relation \eqref{eqdec} gives us some information about the ways in which a number $n$ can be written in base $N$ using the digits from $L$.

\end{example}

\begin{example}\label{ex5.2}
Let $R=4$, $B=\{0,2\}$. Then $\mu$ is the Cantor 4-measure in Jorgensen-Pedersen \cite{JP98} or Picioroaga-Weber \cite{PW15}. 
Take $L=\{0,3,9\}$, and $\alpha_0=1$, $|\alpha_3|^2+|\alpha_9|^2=1$, $\alpha_3,\alpha_9\neq 0$. 

Note first that the sets 
$$E(0,3)=\{e_{l_0+4l_1+\dots +4^k l_k} : l_0\dots l_k\in\Omega(\{0,3\})\},\quad E(0,9)=\{e_{l_0+4l_1+\dots +4^kl_k}  : l_0\dots l_k\in\Omega(\{0,9\})\} $$
are incomplete. Indeed $(-1-3)/4=-1$ so $\{-1\}$ is an extreme cycle for the digits $\{0,3\}$, and $(-3-9)/3=-3$ so $\{-3\}$ is an extreme cycle for the digits $\{0,9\}$. This implies that the sets are incomplete (see \cite{DJ06}, or use Theorem \ref{th2} to see that $e_{-1}$ is orthogonal to $E(0,3)$ and $e_{-3}$ is orthogonal to $E(0,9)$. 

It is easy to check that condition (ii) in Theorem \ref{th1} is satisfied. We look now for the finite minimal invariant sets $M$ (for $L$). By Proposition \ref{pr3}, $M$ is contained in $[-9/3,0]$ and also $M$ is contained in $\frac12\bz$. But if $x$ in $M$ is of the form $x=\frac{2k+1}2$, $k\in\bz$, then the transition $x\rightarrow x/4$ is possible so $x/4$ should be in $M$ but it is not in $\frac12\bz$. Thus $M\subset \{-3,-2,-1,0\}$. Since the transition $-3\stackrel{3}{\rightarrow} -3/2$ is possible, we get that $-3$ cannot be in $M$. Similarly we have 
$-2\stackrel{0}{\rightarrow}-1/2$, $-1\stackrel{9}{\rightarrow}-5/2$ so $M$ cannot contain $-2,-1$, thus $M=\{0\}$.

With Theorem \ref{th2}, we obtain that 
$$\left\{\left(\prod_{j=0}^k \alpha_{l_j}\right)e_{l_0+4l_1+\dots +4^kl_k} : l_0\dots l_k\in\Omega(\{0,3,9\})\right\}$$
is a Parseval frame for $L^2(\mu)$.
\end{example}

More generally, we have the following theorem:

\begin{theorem}\label{th5.3}
In dimension $d=1$, let $|R|\geq 2$ and $B$ be as in Theorem \ref{th1} and assume also that the numbers $\alpha_{l}\neq 0$, $l\in L$ satisfy the conditions (i) and (ii) in Theorem \ref{th1}.
Assume that for all $l\in L$, $l\neq0$ we have 
\begin{equation}
[l]\cap L\neq \{l'\in L : l'\equiv l(\mod R)\}.
\label{eq5.3.1t}
\end{equation}
Then 
$$\left\{\left( \prod_{i=0}^k\alpha_{l_i}\right) e_{l_0+R l_1+\dots +R^{k}l_k} : l_0\dots l_k\in\Omega(L)\right\}$$
is a Parseval frame for $L^2(\mu(R,B))$.
\end{theorem}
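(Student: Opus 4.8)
The plan is to reduce everything to Theorem \ref{th2}. Since the numbers $\alpha_l$ are assumed to satisfy conditions (i) and (ii), it suffices to verify the hypothesis of Theorem \ref{th2}, namely that the only finite minimal invariant set (in the sense of Definition \ref{def1}) is $\{0\}$; Theorem \ref{th2} then yields condition (iii) of Theorem \ref{th1} and hence the Parseval frame property. Thus the whole argument is really a statement about the structure of finite minimal invariant sets, and the role of hypothesis \eqref{eq5.3.1t} is to rule out any nontrivial such set.

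I would argue by contradiction: suppose $M\neq\{0\}$ is a finite minimal invariant set. Because $\alpha_l\neq 0$ for all $l\in L$, Proposition \ref{pr3} applies, so every point of $M$ is an extreme cycle point, and whenever $t\in M$ and $t\rightarrow g_{l_0}(t)$ is a possible one-step transition we have the rigidity equality $[l_0]\cap L=\{l'\in L : l'\equiv l_0(\mod R)\}$ (in dimension one $R^T=R$). The key observation is that the inclusion $\{l'\in L : l'\equiv l_0(\mod R)\}\subseteq [l_0]\cap L$ holds automatically from Definition \ref{defcong}, so this rigidity is exactly an equality of those two sets; and hypothesis \eqref{eq5.3.1t} is precisely the assertion that this equality \emph{fails} for every $l_0\neq 0$ in $L$. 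Consequently no possible transition out of a point of $M$ can use a nonzero digit, for that would make Proposition \ref{pr3} and \eqref{eq5.3.1t} contradict one another. I would stress that this excludes not only digits outside $R\bz$ but also any nonzero digit lying in $R\bz$, since \eqref{eq5.3.1t} is postulated for all $l\neq 0$.

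Hence the only admissible transition from any $t\in M$ is the one with digit $0$, i.e. $t\mapsto g_0(t)=R^{-1}t$. Now I invoke that each point of $M$ is a cycle point: taking $t_0\in M$, its cycle reads $t_0\rightarrow t_0/R\rightarrow \dots\rightarrow t_0/R^{p}=t_0$ for some $p\geq 1$, all transitions using digit $0$. This gives $t_0(1-R^{-p})=0$, and since $|R|\geq 2$ forces $R^p\neq 1$, we conclude $t_0=0$. As $t_0\in M$ was arbitrary, $M=\{0\}$, contradicting $M\neq\{0\}$. Therefore $\{0\}$ is the unique finite minimal invariant set and Theorem \ref{th2} completes the proof.

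I do not expect a genuinely hard computational step; the essential (and only subtle) point is recognizing that \eqref{eq5.3.1t} is the exact negation of the rigidity conclusion of Proposition \ref{pr3}, which forces extreme cycles to use only the digit $0$ and then collapse to the origin. The two places that require care are ensuring the reduction to digit $0$ eliminates \emph{every} nonzero digit (including those congruent to $0$ modulo $R$), and verifying that the final cycle-collapse truly relies on $|R|\geq 2$ to guarantee $R^p\neq 1$.
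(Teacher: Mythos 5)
Your proposal is correct and follows essentially the same route as the paper: reduce to Theorem \ref{th2}, apply Proposition \ref{pr3} to see that any possible one-step transition $t\rightarrow g_{l_0}(t)$ out of a minimal finite invariant set forces the rigidity equality $[l_0]\cap L=\{l'\in L: l'\equiv l_0 \,(\mathrm{mod}\ R)\}$, note that hypothesis \eqref{eq5.3.1t} negates this for every $l_0\neq 0$, and conclude that only digit $0$ transitions are possible, so cycles collapse to the fixed point $0$ of $g_0$. The paper's own proof is exactly this argument, stated more tersely.
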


\begin{proof}
The result follows from Theorem \ref{th2} and Proposition \ref{pr3}: if we have a minimal finite invariant set $M$, and we take a point $t\in M$ such that the transition $t\rightarrow g_{l_0}(t)$ is possible in one step, then we must have $[l_0]\cap L=\{l'\in L : l'\equiv l_0(\mod R)\}$. But, according to the hypothesis, this is possible only for $l_0=0$. Which means $t$ has to be the fixed point of $g_0$, so $M=\{0\}$. 
\end{proof}

\begin{corollary}\label{cor5.3}
Consider the Jorgensen-Pedersen measure $\mu_4$ associated to $R=4$, $B=2$. Let $L$ be a finite set that contains $0$ and the rest of the elements are odd numbers. Assume that one of the following conditions is satisfied:
\begin{enumerate}
	\item There exist numbers $l_0,l_0'$ in $L$ such that $l_0\equiv 1(\mod 4)$ and $l_0'\equiv 3(\mod 4)$. 
	\item There exist a number $l_0\in L$, $l_0\neq 0$ such that there are no extreme cycles for $(R,B,\{0,l_0\})$.
\end{enumerate}
Let $\{\alpha_l\}_{l\in L}$ be some non-zero complex numbers with $\alpha_0=1$ and $\sum_{l\in L}|\alpha_l|^2=2$.
 Then the set 

\begin{equation}
\left\{\left(\prod_{j=0}^k \alpha_{l_j}\right) e_{l_0+4l_1+\dots+4^kl_k} :l_0\dots l_k\in\Omega(L)\right\}
\label{eq5.3.2}
\end{equation}
is a Parseval frame for $L^2(\mu_4)$.
\end{corollary}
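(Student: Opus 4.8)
The plan is to derive Corollary \ref{cor5.3} as a consequence of Theorem \ref{th5.3}, so the core of the work is verifying that the hypotheses of that theorem hold in the Jorgensen--Pedersen setting $R=4$, $B=\{0,2\}$. First I would check conditions (i) and (ii) of Theorem \ref{th1}. Condition (i) is immediate since $\alpha_0=1$. For condition (ii), by the equivalence with \eqref{eq1.10.2} in Proposition \ref{preq}, I need $\sum_{l\in L}|\alpha_l|^2|m_B((R^T)^{-1}(t-l))|^2=1$. With $R=4$, $B=\{0,2\}$, one computes $|m_B(x)|^2=\tfrac14|1+e^{4\pi i x}|^2=\cos^2(2\pi x)$, and since the nonzero elements of $L$ are odd while $0\in L$ is even, the relevant phases $e^{2\pi i (R^T)^{-1}l\cdot(b-b')}$ for $b-b'=\pm2$ separate the even digit from the odd digits; equating Fourier coefficients as in the proof of Proposition \ref{preq} reduces condition (ii) to exactly $\sum_{l\in L}|\alpha_l|^2=2$, which is the stated normalization. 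This step is a short direct computation.

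The heart of the argument is then to verify the congruence hypothesis \eqref{eq5.3.1t} of Theorem \ref{th5.3}, namely that for every nonzero $l\in L$ one has $[l]\cap L\neq\{l'\in L:l'\equiv l\,(\mod 4)\}$. Here I would unwind Definition \ref{defcong}: for $R=4$, $B=\{0,2\}$, the relation $[k]$ asks $(k'-k)\cdot R^{-1}b\in\bz$ for all $b\in B$, i.e. $(k'-k)/2\in\bz$, so $[k]$ is the congruence class of $k$ modulo $2$. Since every nonzero element of $L$ is odd, for such $l$ the set $\{l'\in L:l'\equiv l\,(\mod 2)\}$ consists of all nonzero (odd) elements of $L$, whereas $\{l'\in L:l'\equiv l\,(\mod 4)\}$ is a strictly smaller set whenever two odd elements of $L$ lie in different residue classes mod $4$. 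This is precisely where the two alternative hypotheses enter: under (1), the presence of both an $l_0\equiv1\,(\mod4)$ and an $l_0'\equiv3\,(\mod4)$ guarantees that the mod-$2$ class properly contains the mod-$4$ class, so \eqref{eq5.3.1t} holds for every nonzero $l$. I expect this case to be the cleaner of the two.

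Under hypothesis (2), the argument should instead go through the invariant-set characterization directly rather than through \eqref{eq5.3.1t} verbatim; I would route it through Theorem \ref{th2} and Proposition \ref{pr3}. The idea is that a nonzero minimal finite invariant set $M$ must consist of extreme cycle points, and if $t\in M$ admits a transition $t\rightarrow g_{l_0}(t)$ then \eqref{eqac} forces $\sum_{l\equiv l_0(\mod 4)}|\alpha_l|^2=1$ together with $[l_0]\cap L=\{l\equiv l_0(\mod4)\}$. The existence of a single $l_0\neq0$ with no extreme cycles for the sub-IFS $(R,B,\{0,l_0\})$ should be used to rule out that any such cycle survives, by tracing the cycle equations $x_0=R^px_0+\sum R^j l_j$ and showing they would produce an extreme cycle for $\{0,l_0\}$, contradicting the assumption. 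The main obstacle I anticipate is making the case-(2) reduction airtight: one must argue that a cycle in the full system $L$ through $t$, once the digits are pinned down modulo $4$ by the extreme-cycle equalities, actually reduces to a cycle using only digits $0$ and a single $l_0$, so that ``no extreme cycles for $\{0,l_0\}$'' genuinely applies. Handling the possibility that different steps of the cycle use different nonzero digits from the same residue class, and confirming that the extreme condition $t\cdot b\in\bz$ still collapses the problem to a two-digit sub-system, is the delicate point; once that is settled, Theorem \ref{th2} yields that the only minimal invariant set is $\{0\}$, and the Parseval frame conclusion \eqref{eq5.3.2} follows immediately.
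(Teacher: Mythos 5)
Your verification of conditions (i)--(ii) of Theorem \ref{th1} and your treatment of case (1) are correct and coincide with the paper's argument: since every nonzero element of $L$ is odd, $[l]\cap L=L\setminus\{0\}$ for each $l\neq 0$, while hypothesis (1) makes $\{l'\in L : l'\equiv l\ (\mod 4)\}$ a proper subset of $L\setminus\{0\}$, so \eqref{eq5.3.1t} holds and Theorem \ref{th5.3} applies. Case (2), however, contains a genuine gap --- one you flag yourself but never close. Your plan is to show that a non-trivial minimal finite invariant set $M$ for the full digit set $L$ would yield an extreme cycle for the two-digit system $(R,B,\{0,l_0\})$, but you leave open exactly the step that makes this true: a cycle in $M$ may use digits different from $l_0$ at every step, so something must force transitions with the digit $l_0$ itself to be available. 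The missing ingredients are concrete. (a) If a transition $t\stackrel{l_1}{\rightarrow}g_{l_1}(t)$ is possible at $t\in M$ with $l_1\neq 0$, then Proposition \ref{pr3} gives $[l_1]\cap L=\{l\in L : l\equiv l_1\ (\mod 4)\}$; since all nonzero elements of $L$ are odd, $[l_1]\cap L=L\setminus\{0\}$, so \emph{every} nonzero element of $L$ --- in particular $l_0$ --- is congruent to $l_1$ modulo $4$. (b) Since $B\subset\bz$, the function $m_B$ is $\bz$-periodic, and $g_{l_0}(t)-g_{l_1}(t)=(l_1-l_0)/4\in\bz$, so $|m_B(g_{l_0}(t))|=|m_B(g_{l_1}(t))|\neq 0$ and the transition with digit $l_0$ is also possible at $t$, landing in $M$ by invariance. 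With (a) and (b), from every point of $M$ a transition with digit in $\{0,l_0\}$ is possible and stays in $M$; since $M$ is finite, iterating such transitions closes a cycle with digits in $\{0,l_0\}$ whose points are all extreme (Proposition \ref{pr3}), i.e., a non-trivial extreme cycle for $(R,B,\{0,l_0\})$ (non-trivial because $0\notin M$ by minimality), contradicting hypothesis (2). Without (a) and (b) your sketch does not go through; with them it does, so the gap is fillable, but the proposal as written does not prove case (2).

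It is worth knowing that the paper proves case (2) by a different and much shorter route that avoids analyzing invariant sets of the full system altogether. Hypothesis (2) means the two-digit system $\{0,l_0\}$, with weights $\alpha_0=\alpha_{l_0}=1$, satisfies conditions (i)--(iii) of Theorem \ref{th1} (condition (iii) follows from Theorem \ref{th2} together with Proposition \ref{pr3}: no extreme cycles implies the only finite minimal invariant set is $\{0\}$), hence it generates a \emph{complete} orthonormal set in $L^2(\mu_4)$. The exponentials indexed by the larger set $L$ then a fortiori span $L^2(\mu_4)$, and Corollary \ref{corbigger} (equivalently, Remark \ref{remcomp}: spanning plus conditions (i)--(ii) upgrades to the Parseval property) gives the conclusion. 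Your invariant-set approach, once repaired as above, buys a self-contained dynamical proof of case (2), but at the cost of the digit-class bookkeeping that the paper's monotonicity argument sidesteps entirely.
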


\begin{proof}
 It is easy to check that the conditions (i) and (ii) in Theorem \ref{th1} are satisfied. 

Under the assumption (i), the result follows from Theorem \ref{th5.3}: all the numbers in $L$ except 0 are odd, so $[l_0]\cap L=L\setminus \{0\}$ and $l_0\not\equiv l_0'\mod 4$. 

Under the assumption (ii), we use Corollary \ref{corbigger}, and we have that the set $\{0,l_0\}$ generates a {\it complete} orthonormal set, therefore the same is true for $L$. 
\end{proof}

\begin{remark}\label{rem5.4}
In \cite{DH16}, many examples of numbers $l_0$ are found with the property stated in (ii), that is, the set $\{0,l_0\}$ generates a complete orthonormal basis for $L^2(\mu_4)$. For example, if $p$ is a prime number $p\geq 5$, then the set $\{0,p^k\}$ generates an orthonormal basis, for any $k\geq1$. So if the set $L$ in Theorem \ref{th5.3} contains $p^k$ for some prime number $p\geq 5$ and $k\geq1$, then the set in \eqref{eq5.3.2} is a Parseval frame for $L^2(\mu_4)$.
\end{remark}

\begin{example}\label{ex5.5}
We consider next the case when the set $B$ has two digits. 

\begin{theorem}\label{th5.6}
Let $R$ be an integer $|R|\geq 2$, $B=\{0,b\}$ with $b\in \bz$. Let $L$ be a subset of $\bz$ with $0\in L$ and let $\{\alpha_l\}_{l\in L}$ be some non-zero complex numbers with $\alpha_0=1$. 
The condition (ii) in Theorem \ref{th1} is satisfied if and only if the following four conditions hold:
\begin{enumerate}
	\item $R=2^\alpha r$ with $\alpha\in\bn$ and $r$ odd;
	\item $b=2^\beta q$ with $\beta\in \bn\cup\{0\}$, $\beta\leq \alpha-1$ and $q$ odd;
	\item All numbers $l\in L\setminus\{0\}$ are of the form $l=2^{\alpha-1-\beta}s$ with $s$ odd and $qs$ divisible by $r$;
	\item $\sum_{l\in L\setminus \{0\}}|\alpha_l|^2=1$;	
\end{enumerate}
If in addition
\begin{enumerate}
	\item[(v)]  There exist numbers $l_1,l_2\in L\setminus\{0\}$ such that $l_1\not\equiv l_2(\mod R)$. 
\end{enumerate}
then 
\begin{equation}
\left\{\left(\prod_{j=0}^k \alpha_{l_j}\right) e_{l_0+Rl_1+\dots+R^kl_k} : l_0\dots l_k\in\Omega(L)\right\}
\label{eq5.6.1}
\end{equation}
is a Parseval frame for $L^2(\mu(R,B))$.

\end{theorem}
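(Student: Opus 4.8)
The plan is to treat the two assertions separately, first establishing the equivalence ``(ii) $\iff$ (i)--(iv)'' and then deducing the frame property from (v). For the equivalence I would start from Proposition \ref{preq}, which tells me that condition (ii) of Theorem \ref{th1} holds exactly when the columns of
$$T=\frac{1}{\sqrt 2}\left(e^{2\pi i R^{-1}lb'}\alpha_l\right)_{l\in L,\,b'\in B}$$
are orthonormal. Since $B=\{0,b\}$ and $\alpha_0=1$, the normalization of the column indexed by $b'=0$ reads $\tfrac12\sum_{l\in L}|\alpha_l|^2=1$, which is precisely condition (iv), and the orthogonality of the two columns reads $\sum_{l\in L}|\alpha_l|^2e^{2\pi i lb/R}=0$, i.e. (peeling off the $l=0$ term) $\sum_{l\in L\setminus\{0\}}|\alpha_l|^2e^{2\pi i lb/R}=-1$. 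So the whole content of condition (ii) is captured by (iv) together with this single phase equation.

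The heart of the argument is then a rigidity observation followed by $2$-adic bookkeeping. With (iv) in force, the left-hand side of the phase equation is a convex combination (weights $|\alpha_l|^2\geq 0$ summing to $1$) of points on the unit circle that equals the boundary point $-1$; by strict convexity of the disc this forces $e^{2\pi i lb/R}=-1$, equivalently $2lb/R\in2\bz+1$, for every $l\in L\setminus\{0\}$. Writing $R=2^\alpha r$, $b=2^\beta q$ and $l=2^\gamma s$ with $r,q,s$ odd, I would compute $2lb/R=2^{\,1+\gamma+\beta-\alpha}\,sq/r$ and observe that this is an odd integer iff the exponent $1+\gamma+\beta-\alpha$ vanishes and $r\mid sq$. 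The exponent condition gives $\gamma=\alpha-1-\beta\geq0$, which forces $\alpha\geq1$ (so $R$ is even, giving (i)), $0\leq\beta\leq\alpha-1$ (giving (ii)), and $l=2^{\alpha-1-\beta}s$ with $r\mid qs$ (giving (iii)); the converse direction simply reverses this computation to recover $e^{2\pi i lb/R}=-1$ and hence, with (iv), the orthonormality of the columns. I expect this $2$-adic valuation bookkeeping to be the main obstacle: one must track the powers of $2$ in $R$, $b$, and each $l$ carefully and not lose the residual odd-divisibility constraint $r\mid qs$.

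Finally, assuming (v), I would verify the hypothesis \eqref{eq5.3.1t} of Theorem \ref{th5.3}. The relations $e^{2\pi i lb/R}=-1$ for $l\neq0$ say that $lb/R\equiv\tfrac12\pmod1$, so for any $l_1,l_2\in L\setminus\{0\}$ one has $(l_1-l_2)b/R\in\bz$; hence all nonzero elements of $L$ lie in a single class for the equivalence of Definition \ref{defcong}, whereas $0\notin[l]$ because $-lb/R\equiv\tfrac12$. Thus $[l]\cap L=L\setminus\{0\}$ for every $l\neq0$. By (v) there exist $l_1,l_2\in L\setminus\{0\}$ with $l_1\not\equiv l_2\pmod R$, so for each fixed $l\neq0$ the set $\{l'\in L:l'\equiv l\pmod R\}$ omits at least one of $l_1,l_2$ and is therefore a proper subset of $L\setminus\{0\}=[l]\cap L$. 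This yields $[l]\cap L\neq\{l'\in L:l'\equiv l\pmod R\}$ for all $l\neq0$, which is exactly \eqref{eq5.3.1t}, and Theorem \ref{th5.3} then delivers that the set in \eqref{eq5.6.1} is a Parseval frame for $L^2(\mu(R,B))$.
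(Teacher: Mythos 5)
Your proof is correct, and its overall skeleton matches the paper's: establish $e^{2\pi i lb/R}=-1$ for every $l\in L\setminus\{0\}$, do the $2$-adic valuation bookkeeping to get (i)--(iii), observe that $(l_1-l_2)b/R\in\bz$ for nonzero $l_1,l_2$ so that $[l]\cap L=L\setminus\{0\}$, and then feed condition (v) into Theorem \ref{th5.3} via \eqref{eq5.3.1t}. The one genuine difference is how you obtain the pointwise phase identity in the ``only if'' direction: the paper invokes Proposition \ref{pr1.10}, whose proof in turn rests on the unitary completion and Cuntz-algebra machinery from the proof of Theorem \ref{th1}, whereas you derive $e^{2\pi i lb/R}=-1$ directly from the two column relations (normalization plus orthogonality) by an extreme-point argument: a convex combination, with weights $|\alpha_l|^2$ summing to $1$, of points of the closed unit disc can equal the boundary point $-1$ only if every point carrying positive weight equals $-1$, and all weights are positive since the $\alpha_l$ are assumed nonzero. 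This is a genuinely more self-contained route --- in effect an elementary re-proof of Proposition \ref{pr1.10} in the two-digit case using nothing beyond the isometry condition itself --- while the paper's appeal to Proposition \ref{pr1.10} trades that elementary argument for a citation to a statement valid for arbitrary $B$ in any dimension. One cosmetic remark: you credit Proposition \ref{preq} for the equivalence between condition (ii) of Theorem \ref{th1} and column orthonormality, but that equivalence is just the definition of condition (ii); Proposition \ref{preq} is about the further equivalences with \eqref{eq1.10.2} and the Parseval property for $L^2(\delta_{R^{-1}B})$, neither of which your argument actually needs.
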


\begin{proof}
If the conditions (i)--(iv) hold then for $l\in L\setminus\{0\}$, with $l=2^{\alpha-1-\beta}s$, we have $\frac{bl}{R}=\frac{qs}{2r}$ and $\frac{qs}{r}$ is an odd number. Therefore $e^{2\pi i\frac{bl}{R}}=-1$, and it is easy to see that the conditions (i) and (ii) in Theorem \ref{th1} are satisfied. 

We also have, that for each $l_1,l_2\in L\setminus\{0\}$, $(l_1-l_2)b/R$ is the difference of two odd numbers over 2, so it is an integer; therefore $[l]\cap L=L\setminus\{0\}$ for all $l\in L\setminus\{0\}$. So, if (v) is also satisfied, then the result follows from Theorem \ref{th5.3}.

For the converse, if the conditions (i) and (ii) in Theorem \ref{th1} are satisfied, then we write $R=2^\alpha r$, $b=2^\beta q$ with $r$ and $q$ odd. With Proposition \ref{pr1.10}, we must have 
$1+e^{2\pi i bl/R}=0$ for all $l\in L\setminus\{0\}$, so $bl/R$ is of the form $(2k+1)/2$ for some $k\in\bz$. Thus $R$ has to be even. So $\alpha\geq 1$. If $l$ is of the form $l=2^\gamma s$, then 
we get that $\beta+\gamma-\alpha=-1$. So $\beta\leq \alpha-1$ and $\gamma=\alpha-1-\beta$. Also $qs/r=(2k+1)$ so (i)--(iii) hold. (iv) is also immediate. 
\end{proof}
\end{example}

\begin{remark}\label{rem5.7}
It is interesting to note that the condition (v) is enough to guarantee completeness of the resulting family of weighted exponential functions. It is known that in many cases, for orthonormal bases, if the digits in $L$ are not picked carefully then the resulting set of exponential functions is incomplete, see Example \ref{ex5.2}, or \cite{DH16} for some more such examples.  
\end{remark}

\begin{example}\label{ex5.8}
For the Jorgensen-Pedersen measure $\mu_4$, with $R=4$ and $B=\{0,2\}$. Let $L=\{0,3,15\}$. For this choice of the set $L$ we have a non-trivial, minimal invariant set $M=\{-1,-4\}$. Indeed, the only possible transitions in one step are $-1\stackrel{3}{\rightarrow}-1$, $-1\stackrel{15}{\rightarrow}-4$, and $-4\stackrel{0}{\rightarrow}-1$. By Theorem \ref{th2}, the corresponding set of weighted exponential functions is incomplete. 
\end{example}

\begin{example}\label{ex5.9}
We consider now the case when the set $B$ has three digits. 

\begin{theorem}\label{th5.10}
Let $R$ be an integer, $|R|\geq 2$, $B=\{0,b_1,b_2\}$ with $b_1,b_2\in\bz$. Let $L$ be a subset $\bz$ with $0\in L$ and let $\{\alpha_l\}_{l\in L}$ be some non-zero complex numbers with $\alpha_0=1$. The condition (ii) in Theorem \ref{th1} is satisfied if and only if the following four conditions hold:
\begin{enumerate}
	\item $R=3^\alpha r$, $\alpha\in\bn$, $r$ not divisible by 3;
	\item $b_i=3^\beta q_i$, $\beta\in\bn\cup\{0\}$, $\beta\leq \alpha-1$ and $q_i$ not divisible by 3, $i=1,2$, and $q_1\not\equiv q_2(\mod 3)$;
	\item All numbers $l\in L\setminus\{0\}$ are of the form $l=3^{\alpha-1-\beta}s_l$, with $s_l$ not divisible by 3 and $q_is_l$ divisible by $r$, $i=1,2$;
	\item The numbers $\alpha_l$ satisfy the equalities:
	$$\sum_{l\in L, s_l\equiv 1(\mod 3)}|\alpha_l|^2=1,\quad \sum_{l\in L, s_l\equiv 2(\mod 3)}|\alpha_l|^2=1;$$	
\end{enumerate}
If in addition
\begin{enumerate}
	\item[(v)] There exist $l_1,l_1',l_2,l_2'\in L\setminus\{0\}$ such that $s_{l_1}\equiv s_{l_1'}\equiv 1(\mod 3)$, $s_{l_2}\equiv s_{l_2'}\equiv 2(\mod 3)$ and $l_1\not\equiv l_1'(\mod R)$, $l_2\not\equiv l_2'(\mod R)$.  
\end{enumerate}
then the set 
\begin{equation}
\left\{\left(\prod_{j=0}^k \alpha_{l_j}\right) e_{l_0+Rl_1+\dots+R^kl_k} : l_j\in L, k\in\bn\right\}
\label{eq5.9.1}
\end{equation}
is a Parseval frame for $L^2(\mu(R,B))$.

Conversely, if the conditions (i) and (ii) in Theorem \ref{th1} are satisfied, then the statements in (i)--(iv) hold. 
\end{theorem}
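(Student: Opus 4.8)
The plan is to establish the stated equivalence through Proposition~\ref{preq} and Proposition~\ref{pr1.10}, and then to derive the Parseval frame conclusion from Theorem~\ref{th5.3}. Throughout, write $\omega=e^{2\pi i/3}$ and, for $l\neq 0$, decompose $R=3^\alpha r$, $b_i=3^{\beta_i}q_i$, $l=3^{\gamma_l}s_l$ with $r,q_i,s_l$ not divisible by $3$. First I would treat the direction assuming condition (ii) of Theorem~\ref{th1} and deriving (i)--(iv) (this is also the ``conversely'' claim). Since every $\alpha_l\neq 0$, Proposition~\ref{pr1.10} forces
$$1+e^{2\pi i b_1 l/R}+e^{2\pi i b_2 l/R}=0,\quad (l\in L\setminus\{0\}).$$
The elementary fact that three numbers of modulus $1$ summing to zero, one of them being $1$, must be the three cube roots of unity pins down $\{e^{2\pi i b_1 l/R},e^{2\pi i b_2 l/R}\}=\{\omega,\omega^2\}$; equivalently $b_i l/R\equiv \pm\frac13\pmod{1}$ with opposite signs. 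A $3$-adic valuation count then shows that $b_il/R\equiv\pm\frac13\pmod{1}$ forces $\gamma_l+\beta_i-\alpha=-1$ and $r\mid s_lq_i$ for $i=1,2$. Since $\gamma_l$ does not depend on $i$, this gives $\beta_1=\beta_2=:\beta$, $\gamma_l=\alpha-1-\beta$ independent of $l$, and the divisibility in (iii); the two roots being distinct yields $q_1\not\equiv q_2\pmod{3}$. This is exactly (i)--(iii).

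Next I would obtain (iv) from the other half of Proposition~\ref{preq}: orthonormality of the columns of the matrix $T$ is equivalent to $\frac13\sum_{l}|\alpha_l|^2=1$ together with $\sum_l|\alpha_l|^2 e^{2\pi i l(c-c')/R}=0$ for all $c\neq c'$ in $B$. The three difference values $b_1,b_2,b_1-b_2$ produce, via $e^{2\pi i b_1 l/R}\in\{\omega,\omega^2\}$, only one character and its conjugate, so all three off-diagonal equations collapse to $1+P\omega+Q\omega^2=0$, where $P$ and $Q$ are the $|\alpha_l|^2$-masses carried by the two cube-root classes. Linear independence of $1$ and $\omega$ over $\br$ gives $P=Q=1$, which, after identifying the two classes with $s_l\equiv 1$ and $s_l\equiv 2\pmod{3}$ (a relabeling by the unit $q_1/r$ modulo $3$), is precisely (iv); the diagonal relation $\sum_l|\alpha_l|^2=1+P+Q=3$ is then automatic. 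Running the same computation in reverse gives the implication (i)--(iv)$\Rightarrow$(ii), completing the equivalence.

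For the frame statement under (v), the plan is to verify hypothesis~\eqref{eq5.3.1t} of Theorem~\ref{th5.3}. Using (iii), for $l,l'\in L\setminus\{0\}$ one computes $(l-l')b_i/R=(s_l-s_{l'})q_i/(3r)$, which (since $r\mid(s_l-s_{l'})q_i$) is an integer exactly when $s_l\equiv s_{l'}\pmod{3}$; moreover $0\notin[l]$ because $b_il/R\notin\bz$. Hence, with the notation of Definition~\ref{defcong}, $[l]\cap L=\{l'\in L\setminus\{0\}:s_{l'}\equiv s_l\pmod{3}\}$, whereas $\{l'\in L:l'\equiv l\pmod{R}\}$ lies in a single residue class modulo $R$. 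Condition (v) furnishes, for each residue $1$ and $2$ mod $3$, two elements of $L$ in that class that are incongruent mod $R$; therefore for every $l\neq 0$ the set $[l]\cap L$ contains two elements incongruent mod $R$ and so strictly contains $\{l'\equiv l\pmod{R}\}$. Theorem~\ref{th5.3} then yields the Parseval frame.

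The main obstacle I anticipate is the $3$-adic bookkeeping in the forward direction: extracting from the single scalar relation $b_il/R\equiv\pm\frac13\pmod{1}$ that $b_1,b_2$ share the valuation $\beta$, that all $l\neq 0$ share $\gamma_l=\alpha-1-\beta$, and that $r\mid s_lq_i$, simultaneously for $i=1,2$. The collapse of the three off-diagonal equations to one, and the identification of the cube-root classes with the residues of $s_l$ mod $3$, are the other points needing care, though each reduces to a short computation with cube roots of unity.
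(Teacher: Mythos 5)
Your proposal is correct and follows essentially the same route as the paper: Proposition \ref{pr1.10} plus the cube-roots-of-unity fact and a $3$-adic valuation count to get (i)--(iii), the column-orthonormality expansion (Proposition \ref{preq}) to get (iv), the reverse computation for (i)--(iv)$\Rightarrow$(ii), and verification of \eqref{eq5.3.1t} via $[l]\cap L=\{l'\in L\setminus\{0\}: s_{l'}\equiv s_l \ (\mathrm{mod}\ 3)\}$ so that Theorem \ref{th5.3} yields the Parseval frame. The only differences are organizational (the paper does the forward implication explicitly with the WLOG normalization $q_1\equiv 1$, $q_2\equiv 2$, $r\equiv 1\ (\mathrm{mod}\ 3)$ rather than "running the computation in reverse"), and your explicit remark that $0\notin[l]$ is a small point the paper glosses over.
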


\begin{proof}
Assume (i)--(iv) hold. We can assume $q_1\equiv 1$, $q_2\equiv 2$, $r\equiv 1(\mod 3)$, the other cases can be treated similarly. 

We have that $q_1s_l-s_lr$ is divisible by $r$ and by 3. Therefore $q_1s_l-s_lr=3rk$ for some $k\in\bz$, so $\frac{q_1s_l}{r}\equiv s_l(\mod 3)$. Similarly $\frac{q_2s_l}{r}\equiv 2s_l(\mod 3)$. 

Then
$$\sum_{l\in L}|\alpha_l|^2e^{2\pi i\frac{b_1l}{R}}=1+\sum_{l : s_l\equiv 1(\mod 3)}|\alpha_l|^2e^{2\pi i\frac{q_1s_l}{3r}} +\sum_{l : s_l\equiv 2(\mod 3)}|\alpha_l|^2e^{2\pi i\frac{q_1s_l}{3r}}$$
$$=1+e^{2\pi i\frac13}+e^{2\pi i\frac23}=0.$$

Similarly 
$$\sum_{l\in L}|\alpha_l|^2e^{2\pi i\frac{b_2l}{R}}=0,\quad \sum_{l\in L}|\alpha_l|^2e^{2\pi i\frac{(b_1-b_2)l}{R}}=0,\quad \sum_{l\in L}|\alpha_l|^2=3.$$
Thus, the conditions (i) and (ii) in Theorem \ref{th1} are satisfied. 

If $l_0\in L\setminus\{0\}$ then 
$$[l]\cap L=\{l\in L :\frac{q_i(s_l-s_{l_0})}{3r}\in\bz\}=\{l\in L :s_l\equiv s_{l_0}(\mod 3)\}.$$
Condition (v) implies that \eqref{eq5.3.1t} is satisfied, and the result then follows from Theorem \ref{th5.3}.

For the converse, write $b_1=3^{\beta_1}q_1$, $b_2=3^{\beta_2}q_2$, $R=3^\alpha r$, $l=3^{\gamma_l}s_l$ with $\alpha,\beta_1,\beta_2,\gamma_l\geq 0$ and $r,q_1,q_2,\gamma_l$ not divisible by 3 ($l\in L\setminus\{0\}$). 

Proposition \ref{pr1.10} implies that 
$$1+e^{2\pi i 3^{\beta_1+\gamma_l-\alpha}\frac{q_1 s_l}{r}}+ e^{2\pi i 3^{\beta_2+\gamma_l-\alpha}\frac{q_2 s_l}{r}}=0,$$
for all $l\in L\setminus\{0\}$. 

But it is easy to see that if $|z_1|=|z_2|=1$ and $1+z_1+z_2=0$ then $\{z_1,z_2\}=\{e^{2\pi i\frac13},e^{-2\pi i\frac13}\}$. 
Thus $3^{\beta_i+\gamma_l-\alpha}\frac{q_is_l}{r}=\pm\frac13+k$ for some $k\in\bz$. This implies, since $q_i,s_l,r$ are not divisible by 3, that $\beta_i+\gamma_l-\alpha=-1$ for all $l\in L\setminus\{0\}$ and $i=1,2$. So $\beta_1=\beta_2=:\beta\leq \alpha-1$ and $\gamma_l=\alpha-\beta-1$ for all $l\neq 0$. Furthermore, $\frac{q_is_l}{r}=\pm 1+3k$. So $\frac{q_is_l}{r}$ is an integer and $q_1\not\equiv q_2(\mod 3)$. 

We can assume $q_1\equiv 1,q_2\equiv 2(\mod 3)$ and $r\equiv 1(\mod 3)$, the other cases can be treated similarly. Since we have 
$$0=\sum_{l\in L}|\alpha_l|^2e^{2\pi i \frac{b_1l}{R}}=1+\left(\sum_{l\in L, s_l\equiv 1(\mod 3)}|\alpha_l|^2\right)e^{2\pi i \frac13}+\left(\sum_{l\in L, s_l\equiv 2(\mod 3)}|\alpha_l|^2\right)e^{2\pi i \frac23},$$
it follows that (iv) is satisfied. 
\end{proof}

\end{example}

\begin{example}\label{ex5.11}
Let $R=6$, $B=\{0,2,4\}$, $L=\{0,1,5,20\}$ and $\alpha_0=1$, $\alpha_1=1$ and $\alpha_5,\alpha_{20}$ some non-zero complex numbers with $|\alpha_5|^2+|\alpha_{20}|^2=1$. The conditions (i)--(iv) of Theorem \ref{th5.10} are satisfied, but condition (v) is not. However, the set generates a complete Parseval frame of weighted exponential functions. We prove that there are no finite minimal invariant sets other than $\{0\}$. 

Note that 
$$m_B(x)=\frac{1}{3}\left(1+e^{2\pi i 2 x}+e^{2\pi i4 x}\right),$$
therefore $m_B(x)=1$ on $\frac12\bz$ and $m_B(x)=0$ iff $x$ is of the form $x=\frac{6k+j}{6}$ with $k\in\bz$ and $j\in\{1,2,4,5\}$ (consider the roots of $1+z^2+z^4=(z^6-1)/(z^2-1)$). 

By Proposition \ref{pr3}, if $M$ is a minimal finite invariant set, then every $x\in M$ is in $\frac12\bz$. But if $x=\frac{2k+1}{2}$ for some integer $k$, then the transition  $x\stackrel{0}{\rightarrow}\frac x6=\frac{2k+1}{12}\not\in\frac12\bz$ is possible in one step, so, such an element $x$ cannot be in $M$. 

So, again by Proposition \ref{pr3}, the only possible candidates for elements in $M$ are $-4,-3,-2,-1$. The following transitions are possible in one step $-4\stackrel{20}{\rightarrow}-4$, but also 
$-4\stackrel{5}{\rightarrow}-\frac96=-\frac32$, so $-4$ is not in $M$. $-3\stackrel{0}{\rightarrow}-\frac12$ so $-3$ is not in $M$. $-2\stackrel{1}{\rightarrow}-\frac12$ so $-2$ is not in $M$. 
$-1\stackrel{5}{\rightarrow}-1$ but also $-1\stackrel{20}{\rightarrow}-\frac{-21}{6}=-\frac72$ so $-1$ is not in $M$. Thus $M$ has to be $\{0\}$. 

With Theorem \ref{th2} we obtain that the set 
$$
\left\{\left( \prod_{i=0}^k\alpha_{l_i}\right) e_{l_0+R l_1+\dots +R^{k}l_k} : l_0\dots l_k\in\Omega(L)\right\}$$
is a Parseval frame for $L^2(\mu(R,B))$.

We note also that every proper subset of $L$ does not generate Parseval frames (even with other choices of the numbers $\alpha_l$). $\{0,1,5\}$ has an extreme cycle $\{-1\}$, $\{0,1,20\}$ has an extreme cycle $\{-4\}$, $\{0,5,20\}$ does not satisfy the condition (ii) in Theorem \ref{th1}, but the argument is that $e_1$ is orthogonal to $e_{l_0+6k}$ for $l_0\in\{0,5,20\}$ and $k$ in $\bz$.
Indeed, note that for all such $l_0$ and $k$,
$$m_B\left(\frac{1-(l_0+6k)}{6}\right)=0,$$
Therefore 
$$\ip{e_1}{e_{l_0+6k}}=\widehat\mu(1-(l_0+6k))=\prod_{n=1}^\infty m_B\left(\frac{1-(l_0+6k)}{6^n}\right)=0.$$

Thus the family of exponential functions generated by $\{0,5,20\}$ cannot be complete. 
\end{example}

We end this section with a conjecture. 

\begin{conjecture}\label{con1}
Let $R$ be a $d\times d$ expansive integer matrix, and let $B$ be a set in $\bz^d$ with $0$ in $B$ and such that the elements of $B$ are mutually incongruent modulo $R$. Suppose there exist a set $L$ in $\bz^d$ with $0\in L$, and numbers $\{\alpha_l : l\in L\}$ with $\alpha_0=1$ such that the functions $\{\alpha_l e_l : l\in L\}$ form a Parseval frame for $L^2(\delta_B)$. Then there exists a set $L_0$ in $\bz^d$, with $0\in L_0$ such that the functions $\{e_l : l\in L_0\}$ form an orthonormal basis for $L^2(\delta_B)$ (in other words $R^{-1}B$ is a spectral set, with spectrum in $\bz^d$).
\end{conjecture}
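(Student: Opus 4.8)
The plan is to recast the conjecture as a statement about the finite abelian group $G=\bz^d/R\bz^d$ and its dual $\widehat G\cong\bz^d/R^T\bz^d$; consistently with Proposition \ref{preq} and the parenthetical in the statement, I read the relevant exponentials in $L^2(\delta_{R^{-1}B})$, so that $e_l(R^{-1}b)=e^{2\pi i(R^T)^{-1}l\cdot b}$. Since replacing $l$ by $l+R^Tm$ changes the exponent by $m\cdot b\in\bz$, the vector $e_l$ depends only on the class of $l$ in $\widehat G$, hence is the restriction to $B$ of a character $\chi_l$ of $G$; because the elements of $B$ are mutually incongruent modulo $R$, $B$ embeds into $G$ and $L^2(\delta_{R^{-1}B})\cong\ell^2(B)$ has dimension $N$. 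In this language the hypothesis says that the unit vectors $\{\chi|_B\}$ support a weighted Parseval frame, and the conclusion to be reached is exactly that $B$ is a spectral subset of $G$, i.e. that there is $\Lambda\subset\widehat G$ with $|\Lambda|=N$ and $\Lambda-\Lambda\subset Z(B)\cup\{0\}$, where $Z(B):=\{\psi\in\widehat G:\sum_{b\in B}\psi(b)=0\}$; one then includes the trivial character by translating $\Lambda$, which is harmless since the orthogonality condition depends only on differences.

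Next I would make both sides Fourier-analytic on $G$. Writing the frame-operator identity $\sum_l|\alpha_l|^2\,e_l e_l^*=I$ in coordinates turns it into the single requirement that the positive-definite function
$$F(g):=\sum_{\psi\in\widehat G}w_\psi\,\psi(g),\qquad w_\psi:=\sum_{l:\,\chi_l=\psi}|\alpha_l|^2,$$
satisfy $F(0)=N$ and $F\equiv0$ on $(B-B)\setminus\{0\}$; in particular $\sum_\psi w_\psi=N$, and the nonnegative weights $w_\psi$ define a measure on $\widehat G$ whose inverse transform annihilates every nonzero difference of $B$. Thus the hypothesis furnishes a \emph{fractional} solution of the combinatorial problem whose $0/1$-valued solutions are precisely the spectra of $B$, and the whole theorem reduces to an integralization: produce from $w$ an honest spectrum $\Lambda$ of size $N$.

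For the integralization I would argue by induction on $|\det R|=|G|$, the base case being that of a complete residue system ($N=|\det R|$), where $B$ fills $G$ and $\Lambda=\widehat G$ is a spectrum (the classical Hadamard triple). For the inductive step I would look for algebraic structure in $Z(B)$ coming from vanishing sums of roots of unity: a subgroup $H\le\widehat G$ along which $Z(B)$ is coset-structured would let one split $0\to H\to\widehat G\to\widehat G/H\to 0$, push the weighted frame forward to the subquotients, and glue smaller spectra. Here Proposition \ref{pr1.13} supplies the quantitative backbone: after discarding zero weights, the inequalities $N\le\#[L]$ and $\sum_{l\in[l_0]\cap L}|\alpha_l|^2\le1$ show that $L$ already meets at least $N$ distinct characters and that the total weight over each fibre $[l_0]$ is at most $1$, which is exactly the book-keeping needed both to select $N$ mutually orthogonal characters and to control the rounding of the $w_\psi$.

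The hard part will be precisely this integralization: converting the nonnegative weights $w_\psi$ into a choice of exactly $N$ characters whose pairwise differences all lie in $Z(B)$. This is the genuine content of the conjecture, a Fuglede-type ``frame-spectral $\Rightarrow$ spectral'' phenomenon, and no soft averaging can settle it, since positivity of $w$ together with the vanishing of $F$ on $B-B$ does not by itself force the support of $w$ to contain an arithmetically structured spectrum. The success of the subgroup induction therefore hinges on extracting enough rigidity from $Z(B)$, and it is quite possible that the statement as conjectured requires further hypotheses on the pair $(R,B)$ — identifying the minimal such hypotheses is, to my mind, the crux of the problem.
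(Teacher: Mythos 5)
The statement you are attempting is Conjecture \ref{con1}, which the paper does not prove: it is stated as an open problem, with Propositions \ref{pr1.10} and \ref{preq} cited only as supporting evidence. So there is no proof in the paper to compare against, and the question is whether your proposal settles the conjecture on its own. It does not, and you concede as much. Your reformulation is sound and faithful to the paper's framework: reading the exponentials on $L^2(\delta_{R^{-1}B})$ (as Proposition \ref{preq} does), each $e_l$ descends to a character of $G=\bz^d/R\bz^d$ determined by $l$ modulo $R^T\bz^d$; the Parseval hypothesis becomes a system of nonnegative weights $w_\psi$ on $\widehat G$ with $\sum_\psi w_\psi=N$ whose Fourier transform vanishes on $(B-B)\setminus\{0\}$; and the desired conclusion is an honest $N$-element set $\Lambda\subset\widehat G$ with $\Lambda-\Lambda\subset Z(B)\cup\{0\}$, with the translation to put $0\in\Lambda$ being harmless. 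This matches the equivalences in Proposition \ref{preq} and the counting inequalities of Proposition \ref{pr1.13} exactly.

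But the proposal stops precisely where the conjecture begins. The \emph{integralization} step --- passing from a fractional (weighted) solution to a genuine $0/1$-valued spectrum --- is the entire mathematical content of the statement, and your proposed induction on $|G|$ is only a heuristic: you do not exhibit the subgroup $H$, you do not show that $Z(B)$ carries the coset structure the induction would need (vanishing sums of roots of unity need not align along any subgroup), and you do not explain how spectra of the subquotients would be glued while preserving the difference condition. Your own closing paragraph acknowledges that no argument is given for this step and that the conjecture may even require additional hypotheses on $(R,B)$. So what you have is a correct and useful reduction of the conjecture to a finite combinatorial question about $\bz^d/R\bz^d$, together with a candidate strategy --- but the genuine gap is the rounding of the weights $w_\psi$ to a spectrum, which remains exactly as open as the conjecture itself.
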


Propositions \ref{pr1.10} and \ref{preq} offer some positive evidence to support this conjecture.

\begin{acknowledgements}
This work was partially supported by a grant from the Simons Foundation (\#228539 to Dorin Dutkay).

\end{acknowledgements}
\bibliographystyle{alpha}
\bibliography{eframes}

\end{document}